\documentclass[11pt,reqno]{amsart}

\setlength{\textwidth}{6.3in} \setlength{\textheight}{9.25in}
\setlength{\evensidemargin}{0in} \setlength{\oddsidemargin}{0in}
\setlength{\topmargin}{-.3in}

\usepackage{xspace}
\usepackage{amsmath,amsthm,amsfonts,amssymb,latexsym,mathrsfs,color,extarrows}
% load hyperref last
\usepackage{hyperref}

\newtheorem{theorem}{Theorem}%[section]
\newtheorem{cor}[theorem]{Corollary}
\newtheorem{prop}[theorem]{Proposition}

\newcommand{\single}{{\rm single\,}}

\newcommand{\des}{{\rm des\,}}

\newcommand{\exc}{{\rm exc\,}}
\newcommand{\aexc}{{\rm aexc\,}}
\newcommand{\we}{{\rm wexc\,}}

\newcommand{\cyc}{{\rm cyc\,}}
\newcommand{\fix}{{\rm fix\,}}
\newcommand{\dc}{{\rm dc\,}}

\newcommand{\mdn}{\mathcal{D}}

\newcommand{\msn}{\mathfrak{S}_n}

\newcommand{\ms}{\mathfrak{S}}

\newcommand{\lrf}[1]{\lfloor #1\rfloor}

\newcommand{\asc}{{\rm asc\,}}

\newcommand{\Eulerian}[2]{\genfrac{<}{>}{0pt}{}{#1}{#2}}

\newcommand{\arxiv}[1]{\href{http://arxiv.org/abs/#1}{\texttt{arXiv:#1}}}
\linespread{1.25}

\title[Several polynomials associated with Eulerian polynomials]{Context-free grammars for several polynomials associated with Eulerian polynomials}
\author[S.-M.~Ma]{Shi-Mei Ma}
\address{School of Mathematics and Statistics,
        Northeastern University at Qinhuangdao,
         Hebei 066004, P.R. China}
\email{shimeimapapers@163.com (S.-M. Ma)}
\author[J.~Ma]{Jun Ma}
\address{Department of mathematics, shanghai jiao tong university, shanghai, china}
\email{majun904@sjtu.edu.cn(J.~Ma)}
\author[Y.-N. Yeh]{Yeong-Nan Yeh}
\address{Institute of Mathematics,
        Academia Sinica, Taipei, Taiwan}
\email{mayeh@math.sinica.edu.tw (Y.-N. Yeh)}
\author[B.-X. Zhu]{Bao-Xuan Zhu}
\address{School of Mathematical Sciences, Jiangsu Normal University, Xuzhou 221116, P.R. China}
\email{bxzhu@jsnu.edu.cn (B.-X. Zhu)}
\subjclass[2010]{Primary 05A05; Secondary 05A15}
\begin{document}

\maketitle
\begin{abstract}
In this paper, we present grammatical descriptions of several polynomials associated with Eulerian polynomials,
including $q$-Eulerian polynomials, alternating run polynomials and derangement polynomials. As applications, we get
several convolution formulas involving these polynomials. %Moreover, we obtain an explicit formula of the exponential generating function for a multivariable derangement polynomials of type $B$.
\bigskip

\noindent{\sl Keywords}: Eulerian polynomials; Alternating runs; Derangement polynomials; Context-free grammars
\end{abstract}
\date{\today}
%\date{\today}
%%%%%%%%%%%%%%%%%%%%%%%%%%%%%%%%%%%%%%%%%%%
\section{Introduction}
%%%%%%%%%%%%%%%%%%%%%%%%%%%%%%%%%%%%%%%%%%%
%In the following, we recall some definitions, notation and results.
For an alphabet $A$, let $\mathbb{Q}[[A]]$ be the rational commutative ring of formal power
series in monomials formed from letters in $A$. Following~\cite{Chen93}, a context-free grammar over
$A$ is a function $G: A\rightarrow \mathbb{Q}[[A]]$ that replace a letter in $A$ by a formal function over $A$.
The formal derivative $D$ is a linear operator defined with respect to a context-free grammar $G$. More precisely,
the derivative $D=D_G$: $\mathbb{Q}[[A]]\rightarrow \mathbb{Q}[[A]]$ is defined as follows:
for $x\in A$, we have $D(x)=G(x)$; for a monomial $u$ in $\mathbb{Q}[[A]]$, $D(u)$ is defined so that $D$ is a derivation,
and for a general element $q\in\mathbb{Q}[[A]]$, $D(q)$ is defined by linearity. %Therefore,
%for any formal functions $u$ and $v$,
%it follows from the {\it Leibniz's formula} that
%\begin{equation*}\label{Dnab-Leib}
%D^n(uv)=\sum_{k=0}^n\binom{n}{k}D^k(u)D^{n-k}(v).
%\end{equation*}

%In combinatorics, the study of context-free grammars began in 1993~\cite{Chen93} and there has been much recent activity.
Many combinatorial structures can be generated by using context-free
grammars, such as set partitions~\cite{Chen93}, permutations~\cite{Dumont96,Ma121,Ma132},
Stirling permutations~\cite{Chen16,Ma16}, increasing trees~\cite{Chen16,Dumont96},
rooted trees~\cite{Dumont9602} and perfect matchings~\cite{Ma131}.
In this paper, we shall use
a grammatical labeling introduced by Chen and Fu~\cite{Chen16} to
study several polynomials associated with Eulerian polynomials.

Let $\msn$ be the symmetric group on the set $[n]=\{1,2,\ldots,n\}$.
Let $\pi=\pi(1)\pi(2)\cdots\pi(n)\in\msn$. We say that
the index $i\in [n-1]$ is an {\it excedance} of $\pi$ if $\pi(i)>i$.
%let $i$ be called an {\it anti-excedance} of $\pi$ if $\pi(i)<i$, and let $i$ be called a {\it fixed point} of $\pi$ if $\pi(i)=i$.
Denote by $\exc(\pi)$ the number of excedances of $\pi$. The {\it Eulerian polynomials} are defined by
\begin{equation*}
A_0(x)=1,~A_n(x)=\sum_{\pi\in\msn}x^{\exc(\pi)}=\sum_{k=0}^{n-1}\Eulerian{n}{k}x^{k}~\textrm{for $n\ge 1$},
\end{equation*}
where $\Eulerian{n}{k}$ are the {\it Eulerian numbers}.
The numbers $\Eulerian{n}{k}$
satisfy the recurrence relation
$$\Eulerian{n}{k}=(k+1)\Eulerian{n-1}{k}+(n-k)\Eulerian{n-1}{k-1},$$
with the initial conditions $\Eulerian{0}{0}=1$ and $\Eulerian{0}{k}=0$ for $k\geq 1$.

The {\it hyperoctahedral group} $B_n$ is the group of signed permutations of the set $\pm[n]$ such that $\pi(-i)=-\pi(i)$ for all $i$, where $\pm[n]=\{\pm1,\pm2,\ldots,\pm n\}$. Throughout this paper, we always identify a signed permutation $\pi=\pi(1)\cdots\pi(n)$ with the word $\pi(0)\pi(1)\cdots\pi(n)$, where $\pi(0)=0$.
Let $\des_B(\pi)=\#\{i\in\{0,1,2,\ldots,n-1\}|\pi(i)>\pi({i+1})\}$. The {\it Eulerian polynomials of type $B$} are defined by
$${B}_n(x)=\sum_{\pi\in B_n}x^{\des_B(\pi)}=\sum_{k=0}^nB(n,k)x^{k},$$
where $B(n,k)$ are called the {\it Eulerian numbers of type $B$}.
The numbers $B(n,k)$ satisfy
the recurrence relation
$$B(n+1,k)=(2k+1)B(n,k)+(2n-2k+3)B(n,k-1)$$
for $n,k\geq 0$, where $B(0,0)=1$ and $B(0,k)=0$ for $k\geq 1$.
Let us now recall two results on
context-free grammars.
\begin{prop}[{\cite[Section~2.1]{Dumont96}}]\label{Dumont96}
If $A=\{x,y\}$ and $G=\{x\rightarrow xy, y\rightarrow xy\}$,
then for $n\ge 1$
\begin{equation*}
D^n(x)=x\sum_{k=0}^{n-1}\Eulerian{n}{k}x^{k}y^{n-k}.
\end{equation*}
\end{prop}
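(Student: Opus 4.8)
The plan is to prove the identity by induction on $n$, using the single fact that $D$ is a derivation together with the Eulerian recurrence recalled above. The base case $n=1$ is immediate: the grammar gives $D(x)=xy$, while the claimed right-hand side is $x\Eulerian{1}{0}y=xy$ since $\Eulerian{1}{0}=1$.

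For the inductive step the key preliminary computation is to see how $D$ acts on an arbitrary monomial $x^iy^j$. Because $G$ stipulates $D(x)=xy$ and $D(y)=xy$, the derivation property yields $D(x^i)=ix^{i-1}D(x)=ix^iy$ and likewise $D(y^j)=jxy^j$, so that
\[
D(x^iy^j)=ix^iy^{j+1}+jx^{i+1}y^j.
\]
I would then apply this termwise to the inductive hypothesis, written as $D^n(x)=\sum_{k=0}^{n-1}\Eulerian{n}{k}x^{k+1}y^{n-k}$. Acting on the generic summand $x^{k+1}y^{n-k}$ produces $(k+1)x^{k+1}y^{n+1-k}+(n-k)x^{k+2}y^{n-k}$, so that $D^{n+1}(x)$ is a sum of two families of monomials.

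The final step is to collect these contributions by the exponent of $x$. The first family contributes to the coefficient of $x^{m+1}y^{n+1-m}$ through $m=k$, and the second through the shift $m=k+1$; after reindexing, the coefficient of $x^{m+1}y^{n+1-m}$ in $D^{n+1}(x)$ becomes
\[
(m+1)\Eulerian{n}{m}+(n+1-m)\Eulerian{n}{m-1},
\]
which is exactly $\Eulerian{n+1}{m}$ by the stated recurrence $\Eulerian{n}{k}=(k+1)\Eulerian{n-1}{k}+(n-k)\Eulerian{n-1}{k-1}$ (with $n$ replaced by $n+1$ and $k$ by $m$). This gives $D^{n+1}(x)=x\sum_{m=0}^{n}\Eulerian{n+1}{m}x^my^{n+1-m}$ and closes the induction.

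The computation is essentially routine, so I expect no serious conceptual obstacle; the only point demanding care is the index bookkeeping in this last step. One must verify that after the shift the summation range is precisely $0\le m\le n$, and that the two boundary cases ($m=n$ in the first family and $m=0$ in the second) are handled correctly by the vanishing conventions $\Eulerian{n}{n}=0$ and $\Eulerian{n}{-1}=0$, so that the recurrence applies uniformly across the whole range.
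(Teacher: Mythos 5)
Your induction is correct: the base case $D(x)=xy$, the derivation rule $D(x^iy^j)=ix^iy^{j+1}+jx^{i+1}y^j$, and the reindexed coefficient $(m+1)\Eulerian{n}{m}+(n+1-m)\Eulerian{n}{m-1}=\Eulerian{n+1}{m}$ all check out, and you are right that the boundary terms are absorbed by the conventions $\Eulerian{n}{n}=0$ for $n\ge 1$ and $\Eulerian{n}{-1}=0$, so the recurrence applies uniformly for $0\le m\le n$. One point of comparison worth knowing: the paper itself offers no proof of this proposition---it is quoted from Dumont---so there is no in-paper argument for this exact statement; but your recurrence-matching induction is precisely the technique the paper does use for its analogous grammar result on $D^n(x^3y)$, where coefficients $\widetilde{T}(n,k)$ are defined by the expansion and shown to satisfy the recurrence \eqref{tnk-recurrence03}. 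The other route, standard for Dumont's grammar and used in the paper's proof of Theorem~\ref{thm:01}, is a grammatical labeling argument: one labels each permutation so that the monomial $x^{k+1}y^{n-k}$ is the weight of a permutation with $k$ excedances, and checks that inserting the entry $n+1$ realizes the substitution rules $x\rightarrow xy$, $y\rightarrow xy$. That combinatorial route buys a bijective explanation of \emph{why} each monomial counts permutations, while your purely algebraic induction is shorter, self-contained, and certifies the identity with no combinatorial bookkeeping; both are perfectly adequate here.
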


\begin{prop}[{\cite[Theorem~10]{Ma131}}]\label{Ma13}
If $A=\{x,y\}$ and $G=\{x\rightarrow xy^2, y\rightarrow x^2y\}$,
then for $n\geq 1$,
$$D^n(x^2)=2^n\sum_{k=0}^{n-1}\Eulerian{n}{k}x^{2n-2k}y^{2k+2},$$
$$D^n(xy)=\sum_{k=0}^nB(n,k)x^{2n-2k+1}y^{2k+1}.$$
\end{prop}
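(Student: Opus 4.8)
The plan is to prove both identities simultaneously by induction on $n$, using the recurrences for $\Eulerian{n}{k}$ and $B(n,k)$ recorded above. The engine of the whole argument is a single computation: since $D$ is a derivation with $D(x)=xy^2$ and $D(y)=x^2y$, for any exponents $a,b$ one has
\begin{equation*}
D(x^ay^b)=ax^{a-1}(xy^2)y^b+bx^ay^{b-1}(x^2y)=ax^ay^{b+2}+bx^{a+2}y^b.
\end{equation*}
Thus $D$ splits each monomial $x^ay^b$ into two monomials, one raising the $y$-degree by $2$ with weight $a$, the other raising the $x$-degree by $2$ with weight $b$; in particular it preserves the parities of the exponents and raises total degree by $2$, which explains at once why the even seed $x^2$ and the odd seed $xy$ produce two disjoint families. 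This two-term split is exactly the shape needed to reproduce the two-term recurrences.

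For the first identity, I would check the base case $n=1$ directly ($D(x^2)=2x^2y^2$, matching the right-hand side), then assume the formula for $n$ and apply $D$ termwise. Using the displayed rule with $a=2n-2k$, $b=2k+2$, each term $\Eulerian{n}{k}x^{2n-2k}y^{2k+2}$ contributes $(2n-2k)x^{2n-2k}y^{2k+4}+(2k+2)x^{2n-2k+2}y^{2k+2}$. After pulling out $2^{n+1}$ (writing $2n-2k=2(n-k)$ and $2k+2=2(k+1)$), I would collect the coefficient of $x^{2(n+1)-2j}y^{2j+2}$: the first piece contributes when $k=j-1$ with weight $n-j+1$, the second when $k=j$ with weight $j+1$, giving $(j+1)\Eulerian{n}{j}+(n+1-j)\Eulerian{n}{j-1}$, which is precisely $\Eulerian{n+1}{j}$ by the Eulerian recurrence.

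The second identity is entirely parallel, now with odd exponents. Applying the same rule to $B(n,k)x^{2n-2k+1}y^{2k+1}$ and collecting the coefficient of $x^{2(n+1)-2j+1}y^{2j+1}$ yields $(2j+1)B(n,j)+(2n-2j+3)B(n,j-1)$, which is $B(n+1,j)$ by the type $B$ recurrence; the base case $n=1$ gives $D(xy)=x^3y+xy^3=B(1,0)x^3y+B(1,1)xy^3$. There is no serious obstacle here — the computation is essentially forced once the derivation rule is written down — so the only care required is bookkeeping: keeping the two index shifts $k=j-1$ versus $k=j$ straight, and confirming that the weights emerging from $D$ match the coefficients $(k+1),(n-k)$ (respectively $(2k+1),(2n-2k+3)$) appearing in the stated recurrences. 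Alternatively, one could set up a grammatical labeling of (signed) permutations in the spirit of Chen--Fu, assigning $x,y$ to statistics so that $D$ models the insertion of a new largest element; I would keep this bijective route in reserve, but the inductive verification above is the most direct.
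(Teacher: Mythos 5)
Your induction is correct: the derivation rule $D(x^ay^b)=ax^ay^{b+2}+bx^{a+2}y^b$, the two index shifts $k=j-1$ and $k=j$, and the resulting coefficients $(j+1)\Eulerian{n}{j}+(n+1-j)\Eulerian{n}{j-1}$ and $(2j+1)B(n,j)+(2n-2j+3)B(n,j-1)$ all check out against the stated recurrences, with the boundary terms vanishing as needed. The paper states this proposition without proof, citing \cite{Ma131}, but your recurrence-matching induction is exactly the technique the present paper uses for its own analogous result (the proof of the theorem on $T(n,k)$ in Section~3, where $\widetilde{T}(n,k)$ is shown to satisfy the same recurrence and initial conditions), so your argument is essentially the same approach.
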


The {\it $q$-Eulerian polynomials} of types $A$ and $B$ are respectively defined by
$$A_n(x;q)=\sum_{\pi\in\msn}x^{\exc(\pi)}q^{\cyc(\pi)},$$
$$B_n(x;q)=\sum_{\pi\in B_n}x^{\des_B(\pi)}q^{N(\pi)},$$
where $\cyc(\pi)$ is the number of cycles in $\pi$ and $N(\pi)=\#\{i\in [n]: \pi(i)<0\}$.
Following~\cite[157-162]{DB62} and~\cite[Theorem~4.3.3]{Zhao11},
the {\it alternating run polynomials} of types $A$ and $B$ can be respectively defined by
\begin{align*}
R_n(x)&=(1-w)\left(\dfrac{1+x}{2}\right)^{n-1}(1+w)^{n}A_n\left(\dfrac{1-w}{1+w}\right),\\
T_n(x)&=\frac{x}{2}\left(\dfrac{1+x}{2}\right)^{n-1}(1+w)^{n}B_n\left(\dfrac{1-w}{1+w}\right)
\end{align*}
for $n\geq 2$,
where $w=\sqrt{\frac{1-x}{1+x}}$.
Following~\cite[Proposition~5]{Brenti90} and~\cite[Theorem~3.2]{Chow09},
the {\it derangement polynomials} of types $A$ and $B$ can be respectively defined by
$$d_n(x)=\sum_{k=0}^n(-1)^{n-k}\binom{n}{k}A_k(x),$$
$$d_n^B(x)=\sum_{k=0}^n(-1)^{n-k}x^{n-k}\binom{n}{k}B_k(x).$$

The purpose of this paper is to explore grammatical descriptions of
the $q$-Eulerian polynomials, the alternating run polynomials and the derangement polynomials.
%%%%%%%%%%%%%%%%%%%%%%%%%%%%%%%%%%%%%%%%%%%
\section{$q$-Eulerian polynomials}\label{Section:02}
%%%%%%%%%%%%%%%%%%%%%%%%%%%%%%%%%%%%%%%%%%%
%%%%%%%%%%%%%%%%%%%%%%%%%%%%%%%%%%%%%%%%%%%%
According to~\cite[Proposition 7.2]{Bre00}, the polynomials $A_n(x;q)$ satisfy the recurrence relation
\begin{equation}\label{anxq-rr}
A_{n+1}(x;q)=(nx+q)A_{n}(x;q)+x(1-x)\frac{\partial}{\partial x}A_{n}(x;q),
\end{equation}
with the initial condition $A_0(x;q)=1$.
In~\cite[Theorem~3.4]{Bre94}, Brenti showed that the polynomials $B_n(x;q)$ satisfy the recurrence relation
\begin{equation}\label{bnxq-rr}
B_{n+1}(x;q)=((n+nq+q)x+1)B_{n}(x;q)+(1+q)x(1-x)\frac{\partial}{\partial x}B_{n}(x;q),
\end{equation}
with the initial condition $B_0(x;q)=1$.
The polynomials $A_n(x;q)$ and $B_n(x;q)$ have been extensively studied (see~\cite{Bra06,Ma08} for instance).
\subsection{Context-free grammar for $A_n(x;q)$}
%%%%%%%%%%%%%%%%%%%%%%%%%%%%%%%%%%%%%%%%%%%%
\hspace*{\parindent}

A {\it standard cycle
decomposition} of $\pi\in\mathfrak{S}_n$ is defined by requiring that each cycle is
written with its smallest element first, and the cycles are
written in increasing order of their smallest element. In the following discussion, we write $\pi$ in standard cycle decomposition.
We say that the index $i\in [n]$ is an {\it anti-excedance} of $\pi$ if $\pi(i)\leq i$.
Denote by $\aexc(\pi)$ the number of anti-excedances of $\pi$.
It is clear that $\exc(\pi)+\aexc(\pi)=n$.
We now give a labeling of $\pi$ as follows:
\begin{itemize}
  \item [\rm ($i$)]Put a superscript label $z$ right after each excedance;
  \item [\rm ($ii$)]Put a superscript label $y$ right after each anti-excedance;
  \item [\rm ($iii$)]Put a superscript label $q$ before each cycle of $\pi$;
  \item [\rm ($iv$)]Put a superscript label $x$ at the end of $\pi$
\end{itemize}
For example,
the permutation $(1,3,4)(2)(5,6)$ can be labeled as
$^q(1^z3^z4^y)^q(2^y)^q(5^z6^y)^x$. The weight of $\pi$ is the product of its labels.

Let $\msn(i,j,k)=\{\pi\in\msn: \aexc(\pi)=i, \exc(\pi)=j, \cyc(\pi)=k\}$.
When $n=1$, we have $\ms_1(1,0,1)=\{^q(1^y)^x\}$. When $n=2$, we have
$\ms_2(2,0,2)=\{^q(1^y)^q(2^y)^x\}$ and $\ms_2(1,1,1)=\{^q(1^z2^y)^x\}$.
Let $n=m$. Suppose we get all labeled permutations in $\msn(i,j,k)$ for all $i,j,k$.
Let $\pi'\in\ms_{n+1}$ be obtained from $\pi\in\msn(i,j,k)$ by inserting the entry $n+1$ into $\pi$.
We distinguish the following three cases:
\begin{itemize}
  \item [\rm ($c_1$)] If the entry $n+1$ is inserted as a new cycle $(n+1)$, then $\pi'\in\ms_{n+1}(i+1,j,k+1)$.
  In this case, the insertion of $n+1$ corresponds to the operation $x\rightarrow qxy$;
\item [\rm ($c_2$)]If the entry $n+1$ is inserted right after an excedance, then $\pi'\in\ms_{n+1}(i+1,j,k)$.
In this case, the insertion of $n+1$ corresponds to the operation $z\rightarrow yz$;
\item [\rm ($c_3$)]If the entry $n+1$ is inserted right after an anti-excedance, then $\pi'\in\ms_{n+1}(i,j+1,k)$.
In this case, the insertion of $n+1$ corresponds to the operation $y\rightarrow yz$.
\end{itemize}
By induction, we get the following result.
\begin{theorem}\label{thm:01}
If $A=\{x,y,z\}$ and $G=\{x\rightarrow qxy, y\rightarrow yz, z\rightarrow yz\}$,
then
\begin{equation}\label{Dnx-Euleiran}
D^n(x)=x\sum_{\pi\in\msn}y^{\aexc(\pi)}z^{\exc(\pi)}q^{\cyc(\pi)}.
\end{equation}
\end{theorem}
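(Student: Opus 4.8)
The plan is to induct on $n$, exploiting the fact that the labeling has been designed so that a single application of $D$ reproduces exactly the three insertion operations $(c_1)$--$(c_3)$. Write $w(\pi)=x\,y^{\aexc(\pi)}z^{\exc(\pi)}q^{\cyc(\pi)}$ for the weight of $\pi$ under the labeling; since each cycle contributes a $q$, each anti-excedance a $y$, each excedance a $z$, and the end of $\pi$ an $x$, the identity to be proved is precisely $D^n(x)=\sum_{\pi\in\msn}w(\pi)$. The base case $n=1$ is immediate: $D(x)=G(x)=qxy$, which is exactly the weight of the single permutation $^q(1^y)^x\in\ms_1(1,0,1)$.

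For the inductive step I would assume $D^m(x)=\sum_{\pi\in\msm}w(\pi)=\sum_{i,j,k}|\msm(i,j,k)|\,x y^i z^j q^k$ and apply $D$ term by term. Using the product rule together with the rules $x\to qxy$, $y\to yz$, $z\to yz$ (and treating $q$ as a constant), the computation I would carry out on a single monomial is
\begin{equation*}
D\!\left(x y^i z^j q^k\right)=q^{k+1}x y^{i+1}z^j+j\,q^{k}x y^{i+1}z^j+i\,q^{k}x y^{i}z^{j+1}.
\end{equation*}
The three terms are then matched, in order, with the cases $(c_1)$, $(c_2)$, $(c_3)$ for inserting the entry $m+1$: the first term ($x\to qxy$) records the unique way to insert $m+1$ as a new cycle, raising $\aexc$ and $\cyc$ each by one; the second term ($z\to yz$) records the $j$ ways to insert $m+1$ right after one of the $j$ excedances, raising $\aexc$ by one; and the third term ($y\to yz$) records the $i$ ways to insert $m+1$ right after one of the $i$ anti-excedances, raising $\exc$ by one. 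Summing over all $\pi\in\msm$ and collecting weights then gives $\sum_{\pi'\in\ms_{m+1}}w(\pi')=D^{m+1}(x)$, closing the induction.

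The step requiring the most care is verifying that these insertions constitute a bijection between $\ms_{m+1}$ and the disjoint union, over $\pi\in\msm$, of the $i+j+1=m+1$ insertion sites; equivalently, that deleting $m+1$ and closing up its cycle sends each $\pi'\in\ms_{m+1}$ to a well-defined $\pi\in\msm$ still in standard cycle form, so that every $\pi'$ is produced exactly once (the count $i+j+1=m+1$ then matches $|\ms_{m+1}|=(m+1)|\msm|$). Inside this, the delicate bookkeeping is tracking how excedance/anti-excedance status changes: one must check that $m+1$ is always an anti-excedance of $\pi'$, that inserting after an excedance preserves that excedance while creating the new anti-excedance $m+1$ (hence $z\to yz$), and that inserting after an anti-excedance turns it into an excedance while again creating $m+1$ as an anti-excedance (hence $y\to yz$), including the wrap-around case where $m+1$ is appended after the last entry of a cycle. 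Because standard cycle decomposition fixes the smallest element of each cycle and $m+1$ exceeds every existing entry, all of these checks go through and the derivative identity aligns termwise with the combinatorics.
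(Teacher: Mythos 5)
Your proof is correct and follows essentially the same route as the paper: the paper establishes the result via the identical grammatical labeling and the three insertion cases $(c_1)$--$(c_3)$ for the entry $n+1$, concluding by induction. Your term-by-term derivative computation and the bijection/counting check $i+j+1=m+1$ simply make explicit the details the paper leaves to the reader.
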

Setting $y=1$ in~\eqref{Dnx-Euleiran}, we get $$D^n(x)|_{y=1}=xA_n(z;q).$$
Furthermore, setting $y=z=1$ in~\eqref{Dnx-Euleiran}, we get
$$D^n(x)|_{y=z=1}=x\sum_{\pi\in\msn}q^{\cyc(\pi)}=xq(q+1)(q+2)\cdots (q+n-1).$$

\begin{cor}
For $n\geq 1$, we have
$$A_{n+1}(x;q)=qA_n(x;q)+qx\sum_{k=0}^{n-1}\binom{n}{k}A_k(x;q)A_{n-k}(x).$$
\end{cor}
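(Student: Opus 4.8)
The plan is to exploit the product $xy$ created by the production rule $x\rightarrow qxy$. Since $D$ is a derivation and $D(x)=qxy$, we have
$$D^{n+1}(x)=D^n(D(x))=qD^n(xy),$$
so it suffices to compute $D^n(xy)$ and then specialize $y=1$. Applying the generalized Leibniz rule for the derivation $D$ gives
$$D^n(xy)=\sum_{k=0}^n\binom{n}{k}D^k(x)D^{n-k}(y),$$
and the strategy is to evaluate the two families of factors separately.

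For the factor $D^k(x)$, Theorem~\ref{thm:01} together with the specialization $D^k(x)|_{y=1}=xA_k(z;q)$ recorded just above handles it directly. For the factor $D^{n-k}(y)$, the key observation is that under $G$ the derivatives of $y$ never involve $x$ or $q$: the rules $y\rightarrow yz$ and $z\rightarrow yz$ form a closed subgrammar on $\{y,z\}$ that is identical, after the relabeling $x\mapsto y$, $y\mapsto z$, to Dumont's grammar in Proposition~\ref{Dumont96}. Hence for $m\ge 1$,
$$D^m(y)=y\sum_{j=0}^{m-1}\Eulerian{m}{j}y^jz^{m-j}.$$

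The delicate point, and the one that produces the extra factor $x$ in the stated formula, is the specialization $y=1$ of this last expression. Setting $y=1$ yields $\sum_{j}\Eulerian{m}{j}z^{m-j}$, which is \emph{not} $A_m(z)$, but rather, after reindexing $j\mapsto m-j$ and invoking the symmetry $\Eulerian{m}{j}=\Eulerian{m}{m-1-j}$ of the Eulerian numbers, equals $zA_m(z)$ for every $m\ge 1$; for $m=0$ one instead gets $D^0(y)|_{y=1}=1$. Thus the boundary term $k=n$ of the Leibniz sum contributes $xA_n(z;q)$, while each term $0\le k\le n-1$ contributes $xz\,A_k(z;q)A_{n-k}(z)$.

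Finally I would assemble the pieces: specializing $y=1$ in $D^{n+1}(x)=qD^n(xy)$ and using $D^{n+1}(x)|_{y=1}=xA_{n+1}(z;q)$ gives
$$xA_{n+1}(z;q)=q\Bigl(xA_n(z;q)+xz\sum_{k=0}^{n-1}\binom{n}{k}A_k(z;q)A_{n-k}(z)\Bigr).$$
Dividing by $x$ and renaming the variable $z$ as $x$ yields the claimed identity. I expect the only genuine obstacle to be the symmetry manipulation in the $y=1$ specialization of $D^m(y)$: a careless evaluation would drop the factor $z$, hence the factor $x$, and it is the clean separation of the $m=0$ case that splits off the leading term $qA_n(x;q)$.
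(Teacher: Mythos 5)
Your proof is correct and takes essentially the same route as the paper's: both apply Leibniz's formula to $D^{n+1}(x)=qD^n(xy)$, evaluate $D^k(x)$ via Theorem~\ref{thm:01} and $D^{n-k}(y)$ via the Dumont subgrammar of Proposition~\ref{Dumont96}. Your symmetry manipulation $\Eulerian{m}{j}=\Eulerian{m}{m-1-j}$ in the $y=1$ specialization is precisely the paper's invocation of $x^mA_m\left(\frac{1}{x}\right)=\frac{1}{x^{m-1}}A_m(x)$ for $m\ge 1$, and splitting off the $m=0$ boundary term to obtain $qA_n(x;q)$ matches the paper's handling of the $k=n$ summand.
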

\begin{proof}
Using the {\it Leibniz's formula},
%\begin{equation*}\label{Dnab-Leib}
%D^n(uv)=\sum_{k=0}^n\binom{n}{k}D^k(u)D^{n-k}(v),
%\end{equation*}
we get
$$D^{n+1}(x)=D^n(qxy)=qD^n(xy)=q\sum_{k=0}^n\binom{n}{k}D^k(x)D^{n-k}(y).$$
Combining Proposition~\ref{Dumont96} and Theorem~\ref{thm:01}, we obtain
$$A_{n+1}(x;q)=q\sum_{k=0}^n\binom{n}{k}A_k(x;q)x^{n-k}A_{n-k}\left(\frac{1}{x}\right).$$
Recall that $A_n(x)$ are symmetric, i.e., $x^nA_n\left(\frac{1}{x}\right)=xA_n(x)$ for $n\ge 1$. Thus we get the desired result.
\end{proof}
%%%%%%%%%%%%%%%%%%%%%%%%%%%%%%%%%%%%%%%%%%%%
\subsection{Context-free grammar for $B_n(x;q)$}
%%%%%%%%%%%%%%%%%%%%%%%%%%%%%%%%%%%%%%%%%%%%
\hspace*{\parindent}

For a permutation $\pi\in B_n$, we define an {\it ascent} to be a position $i\in \{0,1,2\ldots,n-1\}$ such that
$\pi(i)<\pi(i+1)$. Let $\asc_B(\pi)$ be the number of ascents of $\pi\in B_n$. As usual, denote by $\overline{i}$
the negative element $-i$. We shall
show that the following grammar
\begin{equation}\label{grammar02}
A=\{x,y,z,u\},~G=\{x\rightarrow qxyu,y\rightarrow xyz,z\rightarrow yzu,u\rightarrow qxzu\}
\end{equation}
can be used to generate permutations of $B_n$.
Now we give a labeling of $\pi\in B_n$ as follows.
\begin{itemize}
  \item [\rm ($L_1$)]If $i$ is an ascent and $\pi(i+1)>0$, then put a superscript label $z$ and a subscript $x$ right after $\pi(i)$;
 \item [\rm ($L_2$)]If $i$ is a descent and $\pi(i+1)>0$, then put a superscript label $y$ and a subscript $u$ right after $\pi(i)$;
\item [\rm ($L_3$)]If $i$ is an ascent and $\pi(i+1)<0$, then put a superscript label $z$ and a subscript $qx$ right after $\pi(i)$;
\item [\rm ($L_4$)]If $i$ is a descent and $\pi(i+1)<0$, then put a superscript label $y$ and a subscript $qu$ right after $\pi(i)$;
\item [\rm ($L_5$)] Put a superscript label $y$ and a subscript $x$ at the end of $\pi$.
\end{itemize}
The weight of $\pi$ is defined by $$w(\pi)=xy(xz)^{\asc_B(\pi)}(yu)^{\des_B(\pi)}q^{N(\pi)}.$$

%For example, the permutation $0\overline{3}12$ can be labeled as
%$0^{y}_{qu}\overline{3}^z_x1^z_x2^y_x$ and $qx^3y^2z^2u=xy(xz)^2(yu)q$ is its weight.
It is clear that the sum of weights of $01$ and $0\overline{1}$ is given by $D(xy)$, since
$B_1=\{0^z_x1^y_x,0^y_{qu}\overline{1}^y_x\}$ and $D(xy)=x^2yz+qxy^2u$.
To illustrate the relation between the action of the formal derivative $D$ of the grammar~\eqref{grammar02} and the insertion of $n+1$ or $\overline{n+1}$ into a permutation $\pi\in B_n$, we give the following example.
Let $\pi=02\overline{1}4\overline{6}5\overline{7}~\overline{8}\in B_8$.
The labeling of $\pi$ is given by
$$0^z_x2^y_{qu}\overline{1}^z_x4^y_{qu}\overline{6}^z_x5^y_{qu}\overline{7}~^y_{qu}\overline{8}^y_x.$$
If we insert 9 after 0, the resulting permutation is given below,
$$0^z_x9^y_u2^y_{qu}\overline{1}^z_x4^y_{qu}\overline{6}^z_x5^y_{qu}\overline{7}~^y_{qu}\overline{8}^y_x.$$
Thus the insertion of 9 after 0 corresponds to applying the rule $z\rightarrow yzu$ to the label $z$ associated with 0.
If we insert $\overline{9}$ after 0, the resulting permutation is given below,
$$0^y_{qu}\overline{9}^z_x2^y_{qu}\overline{1}^z_x4^y_{qu}\overline{6}^z_x5^y_{qu}\overline{7}~^y_{qu}\overline{8}^y_x.$$
The insertion of $\overline{9}$ after 0 corresponds to applying the rule $x\rightarrow qxyu$ to the label $x$ associated with 0.
If we insert 9 after 2, the resulting permutation is given below,
$$0^z_x2^z_x9^y_{qu}\overline{1}^z_x4^y_{qu}\overline{6}^z_x5^y_{qu}\overline{7}~^y_{qu}\overline{8}^y_x.$$
The insertion of $9$ after 2 corresponds to applying the rule $y\rightarrow xyz$ to the label $x$ associated with 2.
If we insert $\overline{9}$ after 2, the resulting permutation is given below,
$$0^z_x2^y_{qu}\overline{9}^z_{qx}\overline{1}^z_x4^y_{qu}\overline{6}^z_x5^y_{qu}\overline{7}~^y_{qu}\overline{8}^y_x.$$
The insertion of $\overline{9}$ after 2 corresponds to applying the rule $u\rightarrow qxzu$ to the label $u$ associated with 2.
In general, the insertion of $n+1$ (resp. $\overline{n+1}$) into $\pi$ corresponds
to the action of the formal derivative $D$ on a superscript label (resp. subscript label).
By induction, we get the following result.
\begin{theorem}
If $D$ is the formal derivative with respect to the grammar~\eqref{grammar02},
then
\begin{equation}\label{Dnxy}
D^n(xy)=xy\sum_{\pi\in B_n}(xz)^{\asc_B(\pi)}(yu)^{\des_B(\pi)}q^{N(\pi)}.
\end{equation}
\end{theorem}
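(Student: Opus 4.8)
The plan is to argue by induction on $n$, mirroring the structure of the proof of Theorem~\ref{thm:01}. The base case $n=1$ is exactly the computation $D(xy)=x^2yz+qxy^2u$ recorded just before the statement, which matches the two weighted elements of $B_1$. For the inductive step I would assume \eqref{Dnxy} holds for $n$, write $D^{n+1}(xy)=D\!\left(\sum_{\pi\in B_n}w(\pi)\right)=\sum_{\pi\in B_n}D(w(\pi))$ by linearity, and then prove the key claim that for each fixed $\pi\in B_n$ the polynomial $D(w(\pi))$ is precisely the sum of the weights $w(\pi')$ over all $\pi'\in B_{n+1}$ obtained from $\pi$ by inserting the entry $n+1$ or $\overline{n+1}$ into one of the $n+1$ available slots, that is, directly after one of $\pi(0),\pi(1),\dots,\pi(n)$.

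The crux is a bookkeeping correspondence matching occurrences of letters in the monomial $w(\pi)$ with insertions. First I would record that, under the labeling $(L_1)$--$(L_5)$, the monomial $w(\pi)$ carries exactly one superscript and one subscript for each of the $n+1$ slots, so $w(\pi)$ has $2(n+1)$ letters in total, in bijection with the $2(n+1)$ ways to insert $\pm(n+1)$. The correspondence I would set up is: the superscript at a slot records the effect of inserting the positive entry $n+1$ there, while the subscript records the effect of inserting $\overline{n+1}$. Since $n+1$ is larger (resp.\ $-(n+1)$ is smaller) than every entry already present, inserting it after $\pi(i)$ keeps slot $i$ an ascent (resp.\ makes it a descent) and creates exactly one new slot, so the four grammar rules in \eqref{grammar02} fall into four cases according to whether $i$ is an ascent or a descent and whether the inserted entry is positive or negative.

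Concretely, I would verify the four cases one at a time, exactly as illustrated in the worked example preceding the theorem: inserting $n+1$ after an ascent keeps the superscript $z$ and adjoins the labels of the new slot, multiplying $w(\pi)$ by $yu$, which is the rule $z\rightarrow yzu$; inserting $n+1$ after a descent (or after the final entry, whose $(L_5)$ superscript is also $y$) multiplies $w(\pi)$ by $xz$, which is $y\rightarrow xyz$; inserting $\overline{n+1}$ after an ascent multiplies by $qyu$, which is $x\rightarrow qxyu$; and inserting $\overline{n+1}$ after a descent multiplies by $qxz$, which is $u\rightarrow qxzu$. In each case one checks that the new negative entry, when present, accounts for exactly one extra factor of $q$, so that $q^{N(\pi)}$ updates correctly to $q^{N(\pi')}$.

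I expect the main obstacle to be precisely this $q$-bookkeeping in the subscripts. The delicate point is that when the inserted entry is positive it may change the sign-type of the slot it lands in: if $\pi(i+1)$ was negative, slot $i$ loses its $q$ (its subscript $qx$ or $qu$ becomes $x$ or $u$), but the newly created slot then carries that $q$, so the total power of $q$ is conserved and equals $N(\pi)=N(\pi')$. One must confirm in every case that the factor contributed by the grammar rule agrees with the ratio $w(\pi')/w(\pi)$ after these reassignments, and that the updated labels are genuinely the correct $(L_1)$--$(L_5)$ labels of $\pi'$. Once this is checked, summing $D(w(\pi))=\sum_{\pi'}w(\pi')$ over all $\pi\in B_n$ and using that every $\pi'\in B_{n+1}$ arises from a unique $\pi\in B_n$ by deleting $\pm(n+1)$ yields $D^{n+1}(xy)=\sum_{\pi'\in B_{n+1}}w(\pi')$, completing the induction.
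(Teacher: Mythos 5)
Your proposal is correct and takes essentially the same route as the paper: the paper proves \eqref{Dnxy} by exactly this grammatical-labeling induction, matching each of the $2(n+1)$ letter occurrences in $w(\pi)$ (one superscript and one subscript per slot, per rules $(L_1)$--$(L_5)$) with the insertion of $n+1$ acting on a superscript label and of $\overline{n+1}$ acting on a subscript label, via the four rules of the grammar~\eqref{grammar02}. Your explicit treatment of the $q$-bookkeeping --- the factor $q$ migrating from a slot whose negative successor gets displaced to the newly created slot, so that $q^{N(\pi)}$ updates to $q^{N(\pi')}$ --- spells out carefully a point the paper conveys only through its worked example.
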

Setting $x=y=z=1$ in~\eqref{Dnxy}, we get $D^n(xy)|_{x=y=z=1}=B_n(u;q)$.
%%%%%%%%%%%%%%%%%%%%%%%%%%%%%%%%%%%%%%%%%%%
\section{Alternating run polynomials}
%%%%%%%%%%%%%%%%%%%%%%%%%%%%%%%%%%%%%%%%%%%
\hspace*{\parindent}
%%%%%%%%%%%%%%%%%%%%%%%%%%%%%%%%%%%%%%%%%%%
%%%%%%%%%%%%%%%%%%%%%%%%%%%%%%%%%%%%%%%%%%%%
%\subsection{Grammar for alternating runs polynomials}
%%%%%%%%%%%%%%%%%%%%%%%%%%%%%%%%%%%%%%%%%%%%
%\hspace*{\parindent}
%%%%%%%%%%%%%%%%%%%%%%%%%%%%%%%%%%%%%%%%%%%%

For $\pi\in\msn$, a {\it left peak index} is an index $i\in[n-1]$ such that $\pi(i-1)<\pi(i)>\pi(i+1)$, where we take $\pi(0)=0$, and the entry $\pi(i)$ is called a {\it left peak}.
Let $P(n,k)$ be the number of permutations of $\msn$ with $k$ left peaks.
It is well known that the numbers $P(n,k)$ satisfy the recurrence relation
$$P({n,k})=(2k+1)P({n-1,k})+(n-2k+1)P({n-1,k-1}),$$
with the initial conditions $P(1,0)=1$ and $P(1,k)=0$ for $k\geq 1$ (see~\cite{Ma121} for instance).
We say that $\pi\in\msn$ changes
direction at position $i$ if either $\pi({i-1})<\pi(i)>\pi(i+1)$, or
$\pi(i-1)>\pi(i)<\pi(i+1)$, where $i\in\{2,3,\ldots,n-1\}$. We say that $\pi$ has $k$ {\it alternating
runs} if there are $k-1$ indices $i$ such that $\pi$ changes
direction at these positions. The {\it up-down runs} of a permutation $\pi$ are the alternating runs of $\pi$ endowed with a 0
in the front.
Let $R(n,k)$ (resp.~$M(n,k)$) be the number of permutations of $\msn$ with $k$ alternating runs (resp. up-down runs).
The numbers $R(n,k)$ and $M(n,k)$ respectively satisfy the recurrence relations
\begin{equation}\label{rnk-recurrence01}
R(n,k)=kR(n-1,k)+2R(n-1,k-1)+(n-k)R(n-1,k-2),
\end{equation}
\begin{equation}\label{rnk-recurrence02}
M(n,k)=kM(n-1,k)+M(n-1,k-1)+(n-k+1)M(n-1,k-2)
\end{equation}
for $n,k\ge 1$, where $R(1,0)=M(0,0)=M(1,1)=1$ and $R(1,k)=M(n,0)=M(0,k)=0$ for $n,k\ge 1$
(see~\cite{Ma132,Sta08}).
Let $R_n(x)=\sum_{k=1}^{n-1}R(n,k)x^k$ and $M_n(x)=\sum_{k=1}^{n}M(n,k)x^k$.
There is a unified grammatical descriptions of $R_n(x)$ and $M_n(x)$.
\begin{prop}[{\cite[Theorem~6]{Ma132}}]\label{Ma132}
If $A=\{x,y,z\}$ and
%\begin{equation}\label{grammar-run}
$G=\{x\rightarrow xy, y\rightarrow yz,z\rightarrow y^2\}$,
%\end{equation}
then
\begin{align*}
D^{n}(x^2)&=x^2\sum_{k=0}^{n}R(n+1,k)y^kz^{n-k},\\
D^n(x)&=x\sum_{k=1}^nM(n,k)y^kz^{n-k}.
\end{align*}
In particular, $$D^{n}(x^2)|_{z=1}=x^2R_{n+1}(y),~D^n(x)|_{z=1}=xM_n(y).$$
\end{prop}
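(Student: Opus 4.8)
The plan is to prove both identities simultaneously by induction on $n$, verifying that the formal derivative $D$ carries the right-hand side at level $n$ to the right-hand side at level $n+1$. The entire combinatorial content is already packaged in the recurrences~\eqref{rnk-recurrence01} and~\eqref{rnk-recurrence02}, so the argument reduces to matching coefficients and no separate bijective construction is required.

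First I would record the action of $D$ on a generic monomial. From the grammar $G=\{x\rightarrow xy,\ y\rightarrow yz,\ z\rightarrow y^2\}$ together with the Leibniz rule one obtains $D(x^2)=2x^2y$, $D(x)=xy$, $D(y^k)=ky^kz$ and $D(z^{n-k})=(n-k)y^2z^{n-k-1}$. Consequently
\begin{align*}
D\bigl(x^2y^kz^{n-k}\bigr)&=2x^2y^{k+1}z^{n-k}+kx^2y^kz^{n-k+1}+(n-k)x^2y^{k+2}z^{n-k-1},\\
D\bigl(xy^kz^{n-k}\bigr)&=xy^{k+1}z^{n-k}+kxy^kz^{n-k+1}+(n-k)xy^{k+2}z^{n-k-1}.
\end{align*}
For the base case I would check $n=1$ directly: $D(x^2)=2x^2y=x^2R(2,1)y$ since $R(2,0)=0$ and $R(2,1)=2$, and $D(x)=xy=xM(1,1)y$ since $M(1,1)=1$.

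For the inductive step I would apply $D$ to $x^2\sum_kR(n+1,k)y^kz^{n-k}$, insert the first display, and collect the coefficient of $y^mz^{n+1-m}$. The three summands contribute, via $k=m-1,\ m,\ m-2$ respectively, the terms $2R(n+1,m-1)$, $mR(n+1,m)$ and $(n-m+2)R(n+1,m-2)$, whose sum is exactly $R(n+2,m)$ by~\eqref{rnk-recurrence01} with $(n,k)$ replaced by $(n+2,m)$. The identical computation applied to the second display gives the coefficient $M(n,m-1)+mM(n,m)+(n-m+2)M(n,m-2)=M(n+1,m)$ by~\eqref{rnk-recurrence02} with $(n,k)$ replaced by $(n+1,m)$. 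This closes the induction. The two ``In particular'' statements then follow by setting $z=1$ and using $R_{n+1}(y)=\sum_{k=1}^nR(n+1,k)y^k$ and $M_n(y)=\sum_{k=1}^nM(n,k)y^k$, the $k=0$ term of the first sum vanishing for $n\ge 1$.

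The only genuine care needed is bookkeeping: the index shift by which $D^n(x^2)$ produces $R(n+1,\cdot)$ rather than $R(n,\cdot)$, and the alignment of the three monomial-producing operations (the derivative hitting the leading $x^2$ or $x$, the factor $y^k$, or the factor $z^{n-k}$) with the three terms of each recurrence. I expect the main—and quite modest—obstacle to be confirming that the $z$-exponents agree so that all three contributions land on the same monomial $y^mz^{n+1-m}$; once the two displays above are in hand this is immediate. As an alternative in the spirit of the earlier sections, one could instead set up a grammatical labeling of each $\pi\in\msn$ recording its alternating-run (resp. up-down-run) structure and show that inserting the largest entry $n+1$ realizes the action of $D$; this is more combinatorial but longer, whereas the recurrence-matching route is fully self-contained given the stated recurrences.
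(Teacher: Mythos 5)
Your proof is correct, and it is essentially the approach the paper itself takes: the paper only cites this proposition from \cite{Ma132} without reproving it, but its proof of the type-$B$ analogue (the theorem for $T(n,k)$) uses exactly your method---apply $D$ to a generic monomial, match the three resulting coefficients against the known recurrence (\eqref{rnk-recurrence01}, \eqref{rnk-recurrence02}, respectively \eqref{tnk-recurrence03}), and close the induction after checking initial conditions. Your coefficient bookkeeping, index shifts, and base cases all check out.
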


A {\it run} of a signed permutation $\pi\in B_n$ is defined as a maximal interval of consecutive elements on which the elements of $\pi$ are monotonic in the order $\cdots<\overline{2}<\overline{1}<0<1<2<\cdots$.
The {\it up signed permutations} are signed permutations with $\pi(1)> 0$.
For example, the up signed permutation $03\overline{1}24\overline{5}\in B_5$ has four runs, i.e., $03,3\overline{1},\overline{1}24$ and $4\overline{5}$.
Let $T(n,k)$ denote the number of up signed permutations in $B_n$ with $k$ alternating runs.
Zhao~\cite[Theorem 4.2.1]{Zhao11} showed that the numbers $T(n,k)$ satisfy the
the following recurrence relation
\begin{equation}\label{tnk-recurrence03}
T(n,k)=(2k-1)T(n-1,k)+3T(n-1,k-1)+(2n-2k+2)T(n-1,k-2)
\end{equation}
for $n\geqslant 2$ and $1\leqslant k\leqslant n$, where $T(1,1)=1$ and $T(1,k)=0$ for $k>1$.
The alternating run polynomials of type $B$ are defined by $T_n(x)=\sum_{k=1}^nT(n,k)x^k$.
The first few of the polynomials $T_n(x)$ are given as follows:
\begin{align*}
T_1(x)&=x,\\
  T_2(x)&=x+3x^2,\\
  T_3(x)&=x+12x^2+11x^3,\\
  T_4(x)&=x+39x^2+95x^3+57x^4.
\end{align*}

The similarity of the recurrence relations~\eqref{rnk-recurrence01},~\eqref{rnk-recurrence02} and~\eqref{tnk-recurrence03} suggests the existence of a context-free grammar for $T_n(x)$.
We now present the third main result of this paper.
\begin{theorem}
If $A=\{x,y,z\}$ and $G=\{x\mapsto xy^2,y\mapsto yz^2, z\mapsto y^4z^{-1}\}$, then for $n\geq 1$,
\begin{equation*}
\begin{split}
D^n(x^3y)&=x^3y\sum_{k=1}^{n+1}T(n+1,k)y^{2k-2}z^{2n-2k+2},\\
D^n(xy)&=xy(y^2+z^2)\sum_{k=1}^{n}T(n,k)y^{2k-2}z^{2n-2k},\\
D^n(x^2)&=2^nx^2\sum_{k=0}^{n}M(n,k)y^{2k}z^{2n-2k},\\
D^n(x^2y^2)&=2^{n-1}x^2(y^2+z^2)\sum_{k=1}^{n}R(n+1,k)y^{2k}z^{2n-2k},\\
D^n(y^2)&=2^{n}y^2\sum_{k=0}^{\lrf{n/2}}P(n,k)y^{4k}z^{2n-4k},
\end{split}
\end{equation*}
%$$D^n(x^3y)=x^3y\sum_{k=0}^{n+1}T(n+1,k)y^{2k-2}z^{2n-2k+2}.$$
%$$D^n(x^2)=2^nx^2\sum_{k=0}^{n}P(n,k)y^{2k}z^{2n-2k}.$$
\end{theorem}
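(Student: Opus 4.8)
The plan is to prove each of the five identities by induction on $n$, the common engine being the action of $D$ on a single monomial. A direct computation from the grammar $G=\{x\mapsto xy^2,\,y\mapsto yz^2,\,z\mapsto y^4z^{-1}\}$ gives, for any exponents $a,b,c$,
\begin{equation*}
D(x^ay^bz^c)=a\,x^ay^{b+2}z^c+b\,x^ay^bz^{c+2}+c\,x^ay^{b+4}z^{c-2}.
\end{equation*}
Thus $D$ fixes the $x$-exponent and produces three shifted copies of each monomial in the $(y,z)$-variables, raising the total $(y,z)$-degree by $2$. The negative power $z^{-1}$ in the rule for $z$ causes no difficulty: in every identity the $z$-exponents occurring are nonnegative even integers, and the $z$-rule carries the factor $c$, which vanishes precisely on the terms with $c=0$ where a negative power would otherwise appear.

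First I would dispatch the three single-sum identities, those for $D^n(x^3y)$, $D^n(x^2)$ and $D^n(y^2)$, each of which matches its own recurrence verbatim. After checking the base case $n=1$ directly (for instance $D(x^3y)=x^3y(3y^2+z^2)$, in agreement with $T(2,1)=1$ and $T(2,2)=3$), one applies $D$ termwise to the inductive hypothesis and reads off the coefficient of a fixed target monomial. For the $x^3y$-identity the coefficient of $x^3y^{2k-1}z^{2n-2k+4}$ in $D^{n+1}(x^3y)$ receives exactly three contributions, from the $x$-, $y$- and $z$-rules acting on the summands at indices $k-1$, $k$ and $k-2$, and equals
\begin{equation*}
(2k-1)T(n+1,k)+3T(n+1,k-1)+(2n-2k+6)T(n+1,k-2),
\end{equation*}
which is precisely $T(n+2,k)$ by~\eqref{tnk-recurrence03}. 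The $x^2$- and $y^2$-identities proceed identically; after cancelling the power of $2$ the coefficient identities become the recurrence~\eqref{rnk-recurrence02} for $M(n,k)$ and the stated recurrence for $P(n,k)$, respectively, where one uses that $M(n,k)$ and $P(n,k)$ vanish outside their ranges so that the index shifts respect the sum boundaries.

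The two identities carrying the factor $(y^2+z^2)$, those for $D^n(xy)$ and $D^n(x^2y^2)$, need one further bookkeeping step. I would expand the factor so that the right-hand side becomes a genuine sum of monomials: for $D^n(xy)$ the coefficient of $xy^{2j+1}z^{2n-2j}$ is then the diagonal sum $T(n,j)+T(n,j+1)$, while for $D^n(x^2y^2)$ the coefficient of $x^2y^{2m}z^{2n+2-2m}$ is $2^{n-1}\bigl(R(n+1,m-1)+R(n+1,m)\bigr)$. Applying $D$ and collecting as before, the target coefficient again splits into three contributions, and the key point is that the diagonal structure is preserved: substituting~\eqref{tnk-recurrence03} (resp.~\eqref{rnk-recurrence01}) into each of the two neighbouring summands and adding reproduces the next diagonal pair. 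The base cases $D(xy)=xy(y^2+z^2)$ and $D(x^2y^2)=2x^2y^2(y^2+z^2)$ are immediate, using $T(1,1)=1$ and $R(2,1)=2$.

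The step I expect to be the real obstacle is precisely this coefficient bookkeeping for the two $(y^2+z^2)$-identities: one must follow how the three shifted copies produced by $D$ interleave with the two pieces coming from $y^2$ and from $z^2$, and check that the six resulting terms reorganize into exactly the diagonal pair $R(n+2,m-1)+R(n+2,m)$ (resp.\ its $T$-analogue) demanded by the recurrence. An index slip is the main danger here; the safeguard is that the linear-in-$k$ coefficients of the form $2n-2k+\text{const}$ appearing in~\eqref{tnk-recurrence03} and~\eqref{rnk-recurrence01} must emerge with the correct constants, which pins the shifts down uniquely. As a final cross-check I would record the structural relation $D^n(x^2)=2\,D^{n-1}(x^2y^2)$, immediate from $D(x^2)=2x^2y^2$, which links the $x^2$- and $x^2y^2$-identities and forces the classical identity relating $M_n$ to $R_{n+1}$.
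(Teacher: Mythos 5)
Your proposal is correct and follows essentially the same route as the paper: expand $D^n$ in monomials, apply $D$ termwise, and match the resulting coefficient identity against the known recurrences for $T(n,k)$, $M(n,k)$, $R(n,k)$ and $P(n,k)$ together with the initial conditions, exactly as the paper does explicitly for $D^n(x^3y)$ before declaring the remaining cases ``similar.'' Your only addition is to actually carry out the ``similar'' cases, in particular the diagonal-pair bookkeeping $T(n,j)+T(n,j+1)$ and $R(n+1,m-1)+R(n+1,m)$ for the two $(y^2+z^2)$-identities, which checks out (both diagonal sums do reproduce under the recurrences, as a direct computation confirms).
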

\begin{proof}
We only prove the assertion for $D^n(x^3y)$ and the others can be proved in a similar way.
Note that $D(x^3y)=x^3y(z^2+3y^2)$ and $D^2(x^3y)=x^3y(z^4+12y^2z^2+11y^4)$.
we define $\widetilde{T}(n,k)$ by
\begin{equation}\label{Dnx-def}
D^n(x^3y)=x^3y\sum_{k=0}^{n+1}\widetilde{T}(n+1,k)y^{2k-2}z^{2n-2k+2}.
\end{equation}
Then $\widetilde{T}(2,0)=0,\widetilde{T}(2,1)=T(2,1)=1$ and $\widetilde{T}(2,2)=T(2,2)=3$.
It follows from~\eqref{Dnx-def} that
\begin{align*}
  D^{n+1}(x^3y)& =D(D^n(x^3y)) \\
              & =x^3y\sum_{k=1}^{n+1}(2k-1)\widetilde{T}(n+1,k)y^{2k-2}z^{2n-2k+4}+x^3y\sum_{k=1}^{n+1}3\widetilde{T}(n+1,k)y^{2k}z^{2n-2k+2}\\
              & +x^3y\sum_{k=1}^{n+1}(2n-2k+2)\widetilde{T}(n+1,k)y^{2k+2}z^{2n-2k}.
\end{align*}
Hence
\begin{equation}\label{Dnx3y}
\widetilde{T}(n+2,k)=(2k-1)\widetilde{T}(n+1,k)+3\widetilde{T}(n+1,k-1)+(2n-2k+6)\widetilde{T}(n+1,k-2).
\end{equation}
By comparing~\eqref{Dnx3y} with~\eqref{tnk-recurrence03}, we see that
the numbers $\widetilde{T}(n,k)$ satisfy the same recurrence relation and initial conditions as $T(n,k)$, so they agree.
\end{proof}

It follows from the {\it Leibniz's formula} that
\begin{align*}
  D^{n}(x^3y)& =\sum_{k=0}^n\binom{n}{k}D^{k}(xy)D^{n-k}(x^2)\\
              & =xyD^n(x^2)+\sum_{k=1}^n\binom{n}{k}D^{k}(xy)D^{n-k}(x^2).
\end{align*}
So the following corollary is immediate.
\begin{cor}
We have
$$T_{n+1}(x)=2^nxM_n(x)+(1+x)\sum_{k=1}^n2^{n-k}\binom{n}{k}T_k(x)M_{n-k}(x).$$
\end{cor}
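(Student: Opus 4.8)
The plan is to read the identity off the three closed forms supplied by the Theorem, applying Leibniz's formula to $D^n(x^3y)$ and then specializing the variables. First I would use the factorization $x^3y=(xy)(x^2)$ to write
\[
D^n(x^3y)=\sum_{k=0}^n\binom{n}{k}D^k(xy)D^{n-k}(x^2),
\]
and immediately isolate the $k=0$ summand $xy\,D^n(x^2)$. This separation is essential: the Theorem's closed form $D^k(xy)=xy(y^2+z^2)\sum_j T(k,j)y^{2j-2}z^{2k-2j}$ is only valid for $k\ge 1$, and the stray factor $(y^2+z^2)$ it carries is precisely what creates the structural asymmetry in the corollary between the lone term $2^nxM_n(x)$ and the convolution sum multiplied by $(1+x)$.

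Next I would substitute all three evaluations from the Theorem, namely the form above for $D^k(xy)$, together with $D^{n-k}(x^2)=2^{n-k}x^2\sum_l M(n-k,l)y^{2l}z^{2(n-k)-2l}$ and $D^n(x^3y)=x^3y\sum_k T(n+1,k)y^{2k-2}z^{2n-2k+2}$. Since every term carries the common prefactor $x^3y$, dividing through by it is clean. I would then set $z=1$ and regard $y^2$ as the indeterminate $x$. Under this specialization the left side becomes $x^{-1}T_{n+1}(x)$ (the exponent $y^{2k-2}$ contributing the factor $x^{-1}$), the isolated $k=0$ term becomes $2^nM_n(x)$ with no $x^{-1}$ since $D^n(x^2)$ has exponents $y^{2l}$, the products $\sum_j T(k,j)y^{2j-2}$ and $\sum_l M(n-k,l)y^{2l}$ collapse to $x^{-1}T_k(x)$ and $M_{n-k}(x)$, and $(y^2+z^2)$ becomes $(1+x)$. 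Collecting these gives
\[
x^{-1}T_{n+1}(x)=2^nM_n(x)+(1+x)\sum_{k=1}^n 2^{n-k}\binom{n}{k}x^{-1}T_k(x)M_{n-k}(x),
\]
and multiplying through by $x$ produces the stated identity.

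The main obstacle I anticipate is purely the exponent bookkeeping: one must verify that the leftover power of $y$ (equivalently of $x$) is uniformly $y^{-2}$ on both the left side and inside the convolution, coming from the $-2$ shift in the $y^{2k-2}$ and $y^{2j-2}$ terms, while the isolated $k=0$ term has no such shift and so contributes the bare $2^nxM_n(x)$ after the final multiplication by $x$. One should also confirm the boundary conventions $M_0(x)=1$ and $M(n-k,0)=0$ for $n-k\ge 1$ so that $\sum_l M(n-k,l)y^{2l}$ is exactly $M_{n-k}(x)$. Once the common factor $x^3y$ is cancelled and $z=1$, $y=\sqrt{x}$ is imposed, no genuine combinatorial difficulty remains; the result follows by direct comparison of the two polynomial expressions.
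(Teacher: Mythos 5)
Your proposal is correct and takes essentially the same route as the paper: the paper's proof likewise applies Leibniz's formula to the factorization $D^n(x^3y)=\sum_{k=0}^n\binom{n}{k}D^k(xy)D^{n-k}(x^2)$, splits off the $k=0$ term $xy\,D^n(x^2)$ (exactly because the closed form for $D^k(xy)$ with its factor $(y^2+z^2)$ holds only for $k\ge 1$), and then reads off the identity under the specialization $z=1$, $y^2=x$. Your exponent bookkeeping, including the uniform $y^{-2}$ shift on the left side and inside the convolution and the convention $M_0(x)=1$, is accurate, so nothing is missing.
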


By using the fact that
\begin{align*}
  D^{n}(x^2y^2)& =\sum_{k=0}^n\binom{n}{k}D^{k}(xy)D^{n-k}(xy)\\
              & =2xyD^n(xy)+\sum_{k=1}^{n-1}\binom{n}{k}D^{k}(xy)D^{n-k}(xy),
\end{align*}
we immediately get a grammatical proof of the following result.
\begin{prop}[{\cite[Theorem~13]{Chow14}}]
For $n\geqslant 2$, we have
\begin{equation*}
2^{n-1}R_{n+1}(x)=2T_n(x)+\frac{1+x}{x}\sum_{k=1}^{n-1}\binom{n}{k}T_k(x)T_{n-k}(x).
\end{equation*}
\end{prop}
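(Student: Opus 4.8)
The plan is to specialize the Leibniz identity displayed just above, feeding into it the closed forms for $D^n(x^2y^2)$, $D^n(xy)$ and the factors $D^k(xy)$ supplied by the preceding Theorem. Starting from
\[
D^n(x^2y^2)=2xy\,D^n(xy)+\sum_{k=1}^{n-1}\binom{n}{k}D^k(xy)D^{n-k}(xy),
\]
I would substitute
\[
D^n(x^2y^2)=2^{n-1}x^2(y^2+z^2)\sum_{k=1}^{n}R(n+1,k)y^{2k}z^{2n-2k},\qquad D^m(xy)=xy(y^2+z^2)\sum_{i=1}^{m}T(m,i)y^{2i-2}z^{2(m-i)}
\]
for $m=n,k,n-k$, and then divide out the common grammatical factor $x^2$ present in every term.

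Next I would pass to polynomials by setting $z=1$ and $y^2=x$. Under this specialization $\sum_{i}T(m,i)y^{2i-2}=y^{-2}T_m(y^2)$ and $\sum_{k}R(n+1,k)y^{2k}=R_{n+1}(y^2)$, while the recurring factor $(y^2+z^2)$ becomes $1+x$. Thus (after dividing by $x^2$) the left-hand side contributes $2^{n-1}(1+x)R_{n+1}(x)$, the term $2xy\,D^n(xy)$ contributes $2(1+x)T_n(x)$, and each product $D^k(xy)D^{n-k}(xy)$ contributes $(1+x)^2x^{-1}T_k(x)T_{n-k}(x)$. Collecting these gives
\[
2^{n-1}(1+x)R_{n+1}(x)=2(1+x)T_n(x)+\frac{(1+x)^2}{x}\sum_{k=1}^{n-1}\binom{n}{k}T_k(x)T_{n-k}(x),
\]
and cancelling a single factor $1+x$ from both sides yields the claimed formula.

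The step I expect to require the most care is the parity bookkeeping. Each $D^m(xy)$ has odd total degree $2m+1$ in $y$ and $z$, so $D^n(xy)$ cannot by itself be specialized to a polynomial in $x=y^2$; only the even-degree combinations occurring in the Leibniz expansion admit this specialization. Tracking the surplus factor $y^{-2}$ in each product $D^k(xy)D^{n-k}(xy)$ is exactly what produces the denominator $x$ in the term $\tfrac{1+x}{x}\sum\binom{n}{k}T_k(x)T_{n-k}(x)$, and noting that the products carry $(1+x)^2$ whereas the other two terms carry only $1+x$ is what makes the final cancellation reconcile all prefactors. Since substituting letters by values is a ring homomorphism on $\mathbb{Q}[[A]]$, these specializations preserve the identity, so no well-definedness issue arises, and the result follows immediately.
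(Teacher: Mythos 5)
Your proof is correct and follows essentially the same route as the paper: both apply Leibniz's formula to $D^n(x^2y^2)=\sum_{k=0}^n\binom{n}{k}D^k(xy)D^{n-k}(xy)$, split off the $k=0$ and $k=n$ terms, substitute the grammatical closed forms $D^n(x^2y^2)=2^{n-1}x^2(y^2+z^2)\sum_k R(n+1,k)y^{2k}z^{2n-2k}$ and $D^m(xy)=xy(y^2+z^2)\sum_i T(m,i)y^{2i-2}z^{2m-2i}$, and specialize $z=1$, $y^2=x$, cancelling a factor of $1+x$ at the end. The only difference is that you spell out the substitution and parity bookkeeping that the paper leaves implicit behind ``we immediately get,'' and your accounting of the prefactors $1+x$ versus $(1+x)^2/x$ is accurate.
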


Let $P_n(x)=\sum_{k=0}^{\lrf{n/2}}P(n,k)x^k$. From $$D^{n}(x^2y^2)=\sum_{k=0}^n\binom{n}{k}D^{k}(x^2)D^{n-k}(y^2),$$
we obtain
\begin{equation*}\label{RnxTnxPnx}
(1+x)R_{n+1}(x)=2x\sum_{k=0}^n\binom{n}{k}M_k(x)P_{n-k}(x^2).
\end{equation*}
Recall that B\'ona~\cite[Section 1.3.2]{Bona12} obtained the following identity:
\begin{equation*}\label{TnxRnx}
M_n(x)=\frac{1}{2}(1+x)R_n(x)~\textrm{for $n\ge 2$}.
\end{equation*}
Therefore,
$$M_{n+1}(x)=x\sum_{k=0}^n\binom{n}{k}M_k(x)P_{n-k}(x^2).$$
%%%%%%%%%%%%%%%%%%%%%%%%%%%%%%%%%%%%%%%%%%%
%%%%%%%%%%%%%%%%%%%%%%%%%%%%%%%%%%%%%%%%%%%
\section{Derangement polynomials}
%%%%%%%%%%%%%%%%%%%%%%%%%%%%%%%%%%%%%%%%%%%
\hspace*{\parindent}
%%%%%%%%%%%%%%%%%%%%%%%%%%%%%%%%%%%%%%%%%%%
%%%%%%%%%%%%%%%%%%%%%%%%%%%%%%%%%%%%%%%%%%%
%%%%%%%%%%%%%%%%%%%%%%%%%%%%%%%%%%%%%%%%%%%
%\subsection{Definitions, notation
%and results}
%%%%%%%%%%%%%%%%%%%%%%%%%%%%%%%%%%%%%%%%%%%%
%\hspace*{\parindent}
%%%%%%%%%%%%%%%%%%%%%%%%%%%%%%%%%%%%%%%%%%%%

There is a larger literature devoted to derangement polynomials (see~\cite{Brenti90,Chen09,Chow09,Lin15,Zeng12} for instance). We now recall some basic definitions, notation
and results.
We say that a permutation $\pi$ is a {\it derangement} if $\pi(i)\neq i$ for any $i\in [n]$.
Let $\mdn_n$ be the set of derangements of $\msn$.
The {\it derangement polynomials} are defined by
$$d_n(x)=\sum_{\pi\in\mdn_n}x^{\exc(\pi)}.$$
The polynomials $d_n(x)$ satisfy the following recurrence relation
$$d_{n+1}(x)=nx\left(d_{n}(x)+d_{n-1}(x)\right)+x(1-x)d_{n}'(x)$$
for $n\geq 1$, with the initial conditions $d_0(x)=1,d_1(x)=0$ (see~\cite{Zhang95}).
Brenti~\cite[Proposition 5]{Brenti90} showed that
\begin{equation*}\label{dxz}
\sum_{n\geq 0}d_n(x)\frac{z^n}{n!}=\frac{1-x}{e^{xz}-xe^z}.
\end{equation*}

For $\pi\in\msn$, we define
$$\fix(\pi)=\#\{i\in [n]: \pi(i)=i\},$$
$$\dc(\pi)=\#\{i\in [n]: \pi(i)<i\}.$$
Clearly, $\exc(\pi)+\fix(\pi)+\dc(\pi)=n$.
In~\cite[Section~2.2]{Dumont96}, Dumont proved that
if $A=\{x,y,z,e\}$ and $G=\{x\rightarrow xy, y\rightarrow xy,z\rightarrow xy, e\rightarrow ez\}$,
then
\begin{equation}\label{Dumont}
D^n(e)=e\sum_{\pi\in\msn}x^{\exc(\pi)}y^{\dc(\pi)}z^{\fix(\pi)}.
\end{equation}
In particular, setting $y=e=1,z=0$ in~\eqref{Dumont}, we get $D^n(e)|_{y=e=1,z=0}=d_n(x)$.

%In the rest of this section, we shall consider the type $B$ derangements polynomials $d_n^B(x)$.
In~\cite{Bre94}, Brenti introduced a definition of type $B$ weak excedance. Let $\pi\in B_n$.
We say that $i\in [n]$ is a {\it type $B$ weak excedance} of $\pi$ if $\pi(i)=i$ or $\pi(|\pi(i)|)>\pi(i)$.
Let $\we(\pi)$ be the number of weak excedances of $\pi$.
It follows from~\cite[Theorem~3.15]{Bre94} that $$B_n(x)=\sum_{\pi\in B_n}x^{\we(\pi)}.$$

A {\it fixed point} of $\pi\in B_n$ is an index $i\in [n]$ such that $\pi(i)=i$. A {\it derangement of type $B$} is a signed permutation $\pi\in B_n$ with no fixed points.
Let $\mdn_n^B$ be the set of derangements of $B_n$. Following~\cite{Chow09}, the {\it type $B$ derangement polynomials} $d_n^B(x)$ are defined by
$$d_0^B(x)=1,~d_n^B(x)=\sum_{\pi\in \mdn_n^B}x^{\we(\pi)}\quad\textrm{for $n\ge 1$}.$$
The first few of the polynomials $d_n^B(x)$ are given as follows:
\begin{align*}
d_1^B(x)&=1,\\
d_2^B(x)&=1+4x,\\
d_3^B(x)&=1+20x+8x^2,\\
d_4^B(x)&=1+72x+144x^2+16x^3.
\end{align*}
Chow~\cite[Theorem~3.2]{Chow09} proved that
$$\sum_{n\geq 0}d_n^B(x)\frac{z^n}{n!}=\frac{1-x}{e^{(2x-1)z}-xe^z}.$$
Moreover, the polynomials $d_n^B(x)$ satisfy the following recurrence relation
\begin{equation*}\label{dnbx-recu}
d_{n+1}^B(x)=2nx\left(d_n^B(x)+d_{n-1}^B(x)\right)+d_n^B(x)+2x(1-x)\frac{d}{dx}d_n^B(x).
\end{equation*}
for $n\geq 2$, with the initial conditions $d_0(x)=1,d_1(x)=0$ (see~\cite[Proposition~3.1]{Chow09}).

Given $\pi\in \mdn_n^B$. Clearly, $$\we(\pi)=\#\{i\in [n]: \pi(|\pi(i)|)>\pi(i)\}.$$
We say that $i$ is an {\it anti-excedance} of $\pi$ if $\pi(|\pi(i)|)<\pi(i)$.
Let $\aexc(\pi)$ be the number of anti-excedances of $\pi$.
We say that $i$ is a singleton if $(\overline{i})$ is a cycle of $\pi$.
Let $\single(\pi)$ be the number of singletons of $\pi$.
Then
\begin{equation}\label{single-exc}
\we(\pi)+\aexc(\pi)+\single(\pi)=n.
\end{equation}

In the following discussion, we always write $\pi$
by using its \emph{standard cycle decomposition}, in which each
cycle is written with its largest entry last and the cycles are written in ascending order of their
last entry. For example, $\overline{3}51\overline{7}2\overline{6}~\overline{4}\in \mdn_7^B$ can be written as
$(\overline{6})(\overline{7},\overline{4})(\overline{3},1)(2,5)$.
Let $(c_1,c_2,\ldots,c_i)$ be a cycle in
standard cycle decomposition of $\pi$. We say that $c_j$ is an {\it
ascent} in the cycle if $c_j<c_{j+1}$, where $1\leq j<i$.
We say that $c_j$ is a {\it descent} in the cycle if $c_j>c_{j+1}$, where $1\leq j\leq i$ and $c_{i+1}=c_1$.
As pointed out by Chow~\cite[p.~819]{Chow09}, if $\pi\in\mdn_n^B$ with no singletons,
then $\we(\pi)$ equals the sum of the number of ascents in each cycle and $\aexc(\pi)$
equals the sum of the number of descents in each cycle.
Let $d(n,i,j)$ be the number of derangements of type $B$ with $i$ weak excedances and $j$ anti-excedances.

We can now conclude the fourth main result of this paper from the discussion above.
\begin{theorem}\label{thm-derangement}
Let $A=\{x,y,z,e\}$ and
\begin{equation}\label{grammar-derangement}
G=\{x\rightarrow xy^2,y\rightarrow x^2y, z\rightarrow x^2y^2z^{-3}, e\rightarrow ez^4\}.
\end{equation}
Then
\begin{equation*}\label{Dne-derangement}
D^n(e)=e\sum_{\pi\in \mdn_n^B}x^{2\we(\pi)}y^{2\aexc(\pi)}z^{4\single(\pi)}.
\end{equation*}
Equivalently,
\begin{equation}\label{Dne-derangement}
D^n(e)=e\sum_{i,j\geq0}d(n,i,j)x^{2i}y^{2j}z^{4(n-i-j)}.
\end{equation}
Setting $y=z=1$ in~\eqref{Dne-derangement}, we get $$D^n(e)|_{y=z=1}=ed_n^B(x^2).$$
\end{theorem}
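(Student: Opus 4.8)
The plan is to argue by induction on $n$, following the grammatical labelling method already used for Theorem~\ref{thm:01} and for $D^n(xy)$ in Section~\ref{Section:02}. Writing each $\pi\in\mdn_n^B$ in the standard cycle decomposition described above, I attach to it the weight $e\,x^{2\we(\pi)}y^{2\aexc(\pi)}z^{4\single(\pi)}$; by~\eqref{single-exc} the triple $(\we,\aexc,\single)$ carries the same data as the exponent vector $(2i,2j,4(n-i-j))$ with $i=\we(\pi)$ and $j=\aexc(\pi)$, so the two displayed forms of the identity are equivalent. The heart of the argument is to match the action of the formal derivative $D$ of the grammar~\eqref{grammar-derangement} on one such monomial with the insertions of the pair $\{n+1,\overline{n+1}\}$ into $\pi$ that produce the members of $\mdn_{n+1}^B$.

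For the base case $n=1$ we have $\mdn_1^B=\{(\overline{1})\}$, for which $\we=\aexc=0$ and $\single=1$, so its weight is $ez^4=D(e)$, as required. Assuming the formula for $n$, I would apply $D$ to $e\sum_\pi x^{2\we}y^{2\aexc}z^{4\single}$ term by term; by Leibniz's rule each monomial splits into four groups according to which letter is differentiated, and the claim is that these match the four families of children of $\pi$ obtained by inserting $n+1$ or $\overline{n+1}$. Concretely: (i) $D(e)=ez^4$ records the creation of the new singleton $(\overline{n+1})$, raising $\single$ by one; (ii) differentiating $x^{2\we}$ gives $2\we\cdot x^{2\we}y^{2}$, the $2\we$ insertions next to a weak excedance, each fixing $\we$ and creating one anti-excedance; (iii) differentiating $y^{2\aexc}$ gives $2\aexc\cdot x^{2}y^{2\aexc}$, the $2\aexc$ insertions next to an anti-excedance, each creating one weak excedance; and (iv) differentiating $z^{4\single}$ gives $4\single\cdot x^{2}y^{2}z^{4\single-4}$, the $4\single$ ways of opening a singleton $(\overline{j})$ into a two-element cycle, destroying it while creating one weak excedance and one anti-excedance. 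Summing over all $\pi\in\mdn_n^B$ then yields the formula for $n+1$, and the final assertion is immediate, since setting $y=z=1$ leaves $e\sum_\pi x^{2\we(\pi)}=ed_n^B(x^2)$ by the definition of $d_n^B$.

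The main obstacle is the combinatorial content behind (i)--(iv): one must verify that inserting $n+1$ or $\overline{n+1}$ is a bijection onto $\mdn_{n+1}^B$, with inverse the deletion of $\pm(n+1)$, so that every child is produced exactly once, and that the effect on $(\we,\aexc,\single)$ is exactly as listed with the stated multiplicities. The delicate points are the factor $2$ coming from the sign of the inserted letter at a non-singleton slot and the factor $4$ arising when a balanced pair $\{j,-j\}$ is opened up, both of which must be tracked through the type~$B$ definition of weak excedance via $\pi(|\pi(i)|)$ and through the ascent/descent conventions inside a cycle. A convenient and essentially equivalent packaging, mirroring the earlier proof for $T_n(x)$, is to define numbers $\widetilde d(n,i,j)$ by $D^n(e)=e\sum_{i,j}\widetilde d(n,i,j)x^{2i}y^{2j}z^{4(n-i-j)}$, read off from the grammar the recurrence
\begin{equation*}
\widetilde d(n+1,i,j)=\widetilde d(n,i,j)+2i\,\widetilde d(n,i,j-1)+2j\,\widetilde d(n,i-1,j)+4(n-i-j+2)\,\widetilde d(n,i-1,j-1),
\end{equation*}
and then check that $d(n,i,j)$ satisfies the same recurrence and the same initial data $d(1,0,0)=1$; confirming this recurrence for $d(n,i,j)$ is precisely the insertion analysis of (i)--(iv).
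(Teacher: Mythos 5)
Your proposal is correct and follows essentially the same route as the paper: the same grammatical labeling of the standard cycle decomposition, the same four insertion cases matching $D(e)=ez^4$, $D(x^2)=2x^2y^2$, $D(y^2)=2x^2y^2$, $D(z^4)=4x^2y^2$, and induction on $n$. Your alternative packaging via the recurrence for $\widetilde d(n,i,j)$ is exactly the recurrence~\eqref{dnij-recu} that the paper derives as a corollary of the theorem, so it adds nothing genuinely different.
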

\begin{proof}
Now we give a labeling of $\pi\in \mdn_n^B$ as follows.
\begin{itemize}
  \item [\rm ($L_1$)]If $i$ is a singleton, then put a superscript label $z^4$ right after $\overline{i}$;
 \item [\rm ($L_2$)]If $c_i$ is an ascent in a cycle, then put a superscript label $x^2$ right after $c_i$;
\item [\rm ($L_3$)]If $c_i$ is a descent in a cycle, then put a superscript label $y^2$ right after $c_i$;
\item [\rm ($L_4$)]Put a superscript label $e$ in the front of $\pi$.
\end{itemize}
The weight of $\pi$ is defined by $$w(\pi)=ex^{2\we(\pi)}y^{2\aexc(\pi)}z^{4\single(\pi)}.$$
Note that $\mdn_1^B=\{^e(\overline{1}^{z^4})\}$ and
$\mdn_2^B=\{^e(\overline{2}^{z^4})(\overline{1}^{z^4}),^e(\overline{1}^{x^2}2^{y^2}),^e(1^{x^2}2^{y^2}),
^e(\overline{2}^{x^2}\overline{1}^{y^2}),^e(\overline{2}^{x^2}1^{y^2})\}$.
Thus the weight of $^e(\overline{1}^{z^4})$ is given by $D(e)$ and the sum of weights of the permutations in $\mdn_2^B$ is
given by $D^2(e)$, since
$D(e)=ez^4$ and $D^2(e)=e(z^8+4x^2y^2)$.

To illustrate the relation between the action of the formal derivative $D$ of
the grammar~\eqref{grammar-derangement} and the insertion of $n+1$ or $\overline{n+1}$
into a permutation $\pi\in \mdn_n^B$, we give the following example.
Let $\pi=(\overline{6})(\overline{7},\overline{4})(\overline{3},1)(2,5)\in\mdn_7^B$.
Then $\pi$ can be labeled as
$$^e(\overline{6}^{z^4})(\overline{7}^{x^2}\overline{4}^{y^2})(\overline{3}^{x^2}1^{y^2})(2^{x^2}5^{y^2}).$$
We distinguish the following four cases:
\begin{itemize}
  \item [\rm ($c_1$)]If we insert $\overline{8}$ as a new cycle, then the resulting permutation is given below,
$$^e(\overline{8}^{z^4})(\overline{6}^{z^4})(\overline{7}^{x^2}\overline{4}^{y^2})(\overline{3}^{x^2}1^{y^2})(2^{x^2}5^{y^2}).$$
This case corresponds to applying the rule $e\rightarrow ez^4$ to the label $e$.
 \item [\rm ($c_2$)]If we insert $\overline{8}$ or $8$ into the cycle $(\overline{6})$, then the resulting permutations are given below,
$$^e(\overline{8}^{x^2}~\overline{6}^{y^2})(\overline{7}^{x^2}\overline{4}^{y^2})(\overline{3}^{x^2}1^{y^2})(2^{x^2}5^{y^2}),$$
$$^e(\overline{6}^{x^2}~8^{y^2})(\overline{7}^{x^2}\overline{4}^{y^2})(\overline{3}^{x^2}1^{y^2})(2^{x^2}5^{y^2}),$$
$$^e({6}^{x^2}~{8}^{y^2})(\overline{7}^{x^2}\overline{4}^{y^2})(\overline{3}^{x^2}1^{y^2})(2^{x^2}5^{y^2}),$$
$$^e(\overline{8}^{x^2}~{6}^{y^2})(\overline{7}^{x^2}\overline{4}^{y^2})(\overline{3}^{x^2}1^{y^2})(2^{x^2}5^{y^2}).$$
It should be noted that in the latter two permutations, we need to replace $\overline{6}$ by $6$.
This case corresponds to applying the rule $z\rightarrow x^2y^2z^{-3}$ to the label $z^4$, since $D(z^4)=4x^2y^2$.
\item [\rm ($c_3$)]If we insert $\overline{8}$ or $8$ right after $\overline{7}$, then the resulting permutations are given below,
$$^e(\overline{6}^{z^4})(\overline{7}^{y^2}~\overline{8}^{x^2}~\overline{4}^{y^2})(\overline{3}^{x^2}1^{y^2})(2^{x^2}5^{y^2}),$$
$$^e(\overline{6}^{z^4})(\overline{7}^{x^2}~{8}^{y^2}~\overline{4}^{y^2})(\overline{3}^{x^2}1^{y^2})(2^{x^2}5^{y^2}).$$
This case corresponds to applying the rule $x\rightarrow xy^2$ to the label $x^2$, since $D(x^2)=2x^2y^2$.
\item [\rm ($c_4$)]If we insert $\overline{8}$ or $8$ right after $\overline{4}$, then the resulting permutations are given below,
$$^e(\overline{6}^{z^4})(\overline{7}^{x^2}~\overline{4}^{y^2}~\overline{8}^{x^2})(\overline{3}^{x^2}1^{y^2})(2^{x^2}5^{y^2}),$$
$$^e(\overline{6}^{z^4})(\overline{7}^{x^2}~\overline{4}^{x^2}8^{y^2})(\overline{3}^{x^2}1^{y^2})(2^{x^2}5^{y^2}).$$
This case corresponds to applying the rule $y\rightarrow x^2y$ to the label $y^2$, since $D(y^2)=2x^2y^2$.
\end{itemize}
Hence
the insertion of $n+1$ or $\overline{n+1}$ into $\pi\in\mdn_n^B$ corresponds
to the action of the formal derivative $D$ on a superscript label.
It can be easily checked
that grammar~\eqref{grammar-derangement} generates all of the type $B$ derangements.
\end{proof}

Let $D$ be the formal derivative with respect to the grammar~\eqref{grammar-derangement}.
Clearly, $D(z^4)=4x^2y^2$.
It is easy to verify that
$$D^n(x^2y^2)=2^n\sum_{k=0}^n\Eulerian{n+1}{k}x^{2k+2}y^{2n-2k+2}.$$
Hence
\begin{equation}\label{Dnz4}
D^n(z^4)=4D^{n-1}(x^2y^2)=2^{n+1}\sum_{k=0}^{n-1}\Eulerian{n}{k}x^{2k+2}y^{2n-2k}~\textrm{for $n\ge 1$}.
\end{equation}
Note that
\begin{align*}
D^{n+1}(e)&=D^n(ez^4)\\
&=\sum_{k=0}^n\binom{n}{k}D^k(e)D^{n-k}(z^4)\\
&=z^4D^n(e)+\sum_{k=0}^{n-1}\binom{n}{k}D^k(e)D^{n-k}(z^4).
\end{align*}

Combining~\eqref{Dnz4} and Theorem~\ref{thm-derangement}, we get the following corollary.
\begin{cor}
For $n\geq 1$,
we have
$$d_{n+1}^B(x)=d_n^B(x)+x\sum_{k=0}^{n-1}2^{n-k+1}\binom{n}{k}d_k^B(x)A_{n-k}(x).$$
Set $d_n^B=d_n^B(1)$. We have $$d_{n+1}^B=d_n^B+n!\sum_{k=0}^{n-1}2^{n-k+1}\frac{d_k^B}{k!}.$$
\end{cor}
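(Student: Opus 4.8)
The plan is to start from the Leibniz expansion of $D^{n+1}(e)=D^n(ez^4)$ recorded just above the corollary, namely
$$D^{n+1}(e)=z^4D^n(e)+\sum_{k=0}^{n-1}\binom{n}{k}D^k(e)D^{n-k}(z^4),$$
where the term $k=n$ has been pulled out separately, and then to specialize every factor at $y=z=1$. By Theorem~\ref{thm-derangement} we have $D^m(e)|_{y=z=1}=ed_m^B(x^2)$, so the left-hand side becomes $ed_{n+1}^B(x^2)$ and the first term on the right becomes $ed_n^B(x^2)$, the factor $z^4$ collapsing to $1$.

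The substantive step is to evaluate $D^{n-k}(z^4)$ at $y=z=1$ for each $k$ in the range $0\le k\le n-1$ (so that $n-k\ge 1$ and~\eqref{Dnz4} is applicable). Using~\eqref{Dnz4} with $n$ replaced by $n-k$ gives
$$D^{n-k}(z^4)=2^{n-k+1}\sum_{j=0}^{n-k-1}\Eulerian{n-k}{j}x^{2j+2}y^{2(n-k)-2j},$$
and setting $y=1$ leaves $2^{n-k+1}x^2\sum_{j}\Eulerian{n-k}{j}x^{2j}=2^{n-k+1}x^2A_{n-k}(x^2)$, where I recognize the inner sum as the Eulerian polynomial $A_{n-k}$ evaluated at $x^2$. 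Assembling the pieces and cancelling the common factor $e$ yields
$$d_{n+1}^B(x^2)=d_n^B(x^2)+x^2\sum_{k=0}^{n-1}2^{n-k+1}\binom{n}{k}d_k^B(x^2)A_{n-k}(x^2).$$
Since this is an identity of polynomials in the single indeterminate $x^2$, I may replace $x^2$ by $x$ throughout to obtain the first claimed formula.

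For the numerical identity I would set $x=1$. Then $d_m^B(1)=d_m^B$ and $A_{n-k}(1)=(n-k)!$, the latter because the Eulerian numbers in row $m$ sum to $m!$. The combinatorial factor simplifies via $\binom{n}{k}(n-k)!=n!/k!$, which turns the sum into $n!\sum_{k=0}^{n-1}2^{n-k+1}d_k^B/k!$, giving the second formula. I expect no genuine obstacle here: essentially all of the hard grammatical work is already carried out in Theorem~\ref{thm-derangement} and in the computation of $D^n(z^4)$ in~\eqref{Dnz4}, so the proof reduces to a careful specialization at $y=z=1$ together with the book-keeping observation that a polynomial identity in $x^2$ transfers verbatim to $x$.
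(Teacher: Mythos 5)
Your proposal is correct and follows exactly the route the paper intends: the paper derives the corollary by combining the displayed Leibniz expansion $D^{n+1}(e)=z^4D^n(e)+\sum_{k=0}^{n-1}\binom{n}{k}D^k(e)D^{n-k}(z^4)$ with~\eqref{Dnz4} and Theorem~\ref{thm-derangement}, specializing at $y=z=1$, just as you do. Your only additions are to make explicit the harmless bookkeeping the paper leaves tacit, namely the substitution $x^2\mapsto x$ and the evaluation $A_{n-k}(1)=(n-k)!$ with $\binom{n}{k}(n-k)!=n!/k!$ for the numerical identity.
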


Moreover, from~\eqref{Dne-derangement}, the following result is immediate.
\begin{cor}
For $n\geq 0$,
we have
\begin{equation}\label{dnij-recu}
d(n+1,i,j)=d(n,i,j)+2id(n,i,j-1)+2jd(n,i-1,j)+4(n-i-j+2)d(n,i-1,j-1),
\end{equation}
with initial conditions $d(1,0,0)=1$ and $d(1,i,j)=0$ for $i\neq 0$ or $j\neq 0$.
\end{cor}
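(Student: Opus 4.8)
The plan is to differentiate the explicit formula for $D^n(e)$ in \eqref{Dne-derangement} one more time and read off the recurrence by matching coefficients of each monomial. Since the statement is asserted to be immediate from \eqref{Dne-derangement}, no combinatorial argument is required: everything reduces to applying the derivation $D$ of the grammar \eqref{grammar-derangement} and bookkeeping the resulting exponent shifts.

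First I would record the action of $D$ on the relevant even powers. Using the derivation property together with the rules $x\rightarrow xy^2$, $y\rightarrow x^2y$ and $z\rightarrow x^2y^2z^{-3}$, one computes
\begin{equation*}
D(x^2)=2x^2y^2,\qquad D(y^2)=2x^2y^2,\qquad D(z^4)=4x^2y^2,
\end{equation*}
and more generally $D(x^{2i})=2i\,x^{2i}y^2$, $D(y^{2j})=2j\,x^2y^{2j}$ and $D(z^{4m})=4m\,x^2y^2z^{4(m-1)}$, while $D(e)=ez^4$.

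Next I would apply $D$ termwise to the expansion $D^n(e)=e\sum_{i,j\geq 0}d(n,i,j)\,x^{2i}y^{2j}z^{4(n-i-j)}$. Writing $m=n-i-j$, the four summands obtained by differentiating $e$, $x^{2i}$, $y^{2j}$ and $z^{4m}$ respectively give, for a single monomial,
\begin{equation*}
D\bigl(e\,x^{2i}y^{2j}z^{4m}\bigr)
= e\,x^{2i}y^{2j}z^{4(m+1)}
+ 2i\,e\,x^{2i}y^{2(j+1)}z^{4m}
+ 2j\,e\,x^{2(i+1)}y^{2j}z^{4m}
+ 4m\,e\,x^{2(i+1)}y^{2(j+1)}z^{4(m-1)}.
\end{equation*}

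Finally I would extract the coefficient of $e\,x^{2i}y^{2j}z^{4(n+1-i-j)}$ in $D^{n+1}(e)=D(D^n(e))$. The four summands above receive contributions from the terms of $D^n(e)$ indexed by $(i,j)$, $(i,j-1)$, $(i-1,j)$ and $(i-1,j-1)$; in each case the shift produces exactly the target $z$-exponent $4(n+1-i-j)$, with weights $1$, $2i$, $2j$ and $4(n-i-j+2)$. Collecting these yields \eqref{dnij-recu}, and the initial conditions follow from $D(e)=ez^4$, which forces $d(1,0,0)=1$ and $d(1,i,j)=0$ otherwise. The only delicate point is aligning the exponent shifts correctly when collecting coefficients; beyond this routine bookkeeping there is no genuine obstacle.
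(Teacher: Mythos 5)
Your proof is correct and follows exactly the route the paper intends: the paper derives the recurrence as ``immediate'' from \eqref{Dne-derangement}, which amounts precisely to your termwise application of $D$ with $D(x^{2i})=2i\,x^{2i}y^2$, $D(y^{2j})=2j\,x^2y^{2j}$, $D(z^{4m})=4m\,x^2y^2z^{4(m-1)}$, $D(e)=ez^4$, followed by coefficient extraction. Your exponent bookkeeping (including the weight $4(n-i-j+2)$ coming from the $(i-1,j-1)$ term with $m=n-i-j+2$) checks out, so the proposal just makes explicit what the paper leaves implicit.
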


Let $$G(x,y;t)=\sum_{n\geq 0}\sum_{\pi\in\mdn_n^B}x^{\we(\pi)}y^{\aexc(\pi)}\frac{t^n}{n!}.$$
\begin{prop}\label{thm-EGF-derangement}
We have
\begin{equation}
G(x,y;t)=\frac{e^{(1-2x)t}}{1-\frac{x}{y-x}(e^{2(y-x)t}-1)}.
\end{equation}
\end{prop}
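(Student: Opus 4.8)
The plan is to read the generating function directly off the grammar \eqref{grammar-derangement}, exploiting that the formal derivative $D$ is a derivation, so that the exponential ``flow'' $\Phi_t(w)=\sum_{n\ge 0}D^n(w)\,t^n/n!$ is a ring homomorphism satisfying $\tfrac{d}{dt}\Phi_t(w)=\Phi_t(D(w))$ (this is exactly Leibniz's formula, already used repeatedly above, packaged as an automorphism of the completed Laurent ring $\mathbb{Q}[x,y,z,z^{-1},e][[t]]$). Writing $X=\Phi_t(x)$, $Y=\Phi_t(y)$, $Z=\Phi_t(z)$ and $E=\Phi_t(e)$ and applying $\Phi_t$ to the grammar rules, I obtain the autonomous system
\[
X'=XY^2,\qquad Y'=X^2Y,\qquad (Z^4)'=4X^2Y^2,\qquad E'=EZ^4,
\]
with $X(0)=x$, $Y(0)=y$, $Z(0)=z$, $E(0)=e$; I record the equation for $Z^4$ rather than for $Z$ to sidestep the negative power $z^{-3}$, noting $(Z^4)'=4Z^3Z'=4X^2Y^2$. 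By Theorem~\ref{thm-derangement}, $E(t)=e\sum_{n\ge 0}\tfrac{t^n}{n!}\sum_{\pi\in\mdn_n^B}x^{2\we(\pi)}y^{2\aexc(\pi)}z^{4\single(\pi)}$, so setting $z=e=1$ and writing $x^2,y^2$ for the final variables turns $E$ into exactly $G(x^2,y^2;t)$. Thus it suffices to solve the system and re-express the answer through $X^2,Y^2$.

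First I would extract two first integrals. Since $(X^2-Y^2)'=2XX'-2YY'=2X^2Y^2-2X^2Y^2=0$, the quantity $X^2-Y^2=x^2-y^2$ is constant; and since $(\log X)'=Y^2$ and $(\log Y)'=X^2$, we get $(\log(X/Y))'=Y^2-X^2$, hence $X^2/Y^2=(x^2/y^2)e^{2(y^2-x^2)t}$. Solving these two relations for $X^2$ and $Y^2$ gives closed forms, from which
\[
X^2Y^2=\frac{(x^2-y^2)^2\,x^2y^2\,e^{2(y^2-x^2)t}}{\bigl(x^2e^{2(y^2-x^2)t}-y^2\bigr)^2}.
\]
Integrating $(Z^4)'=4X^2Y^2$, the substitution $u=x^2e^{2(y^2-x^2)t}-y^2$ collapses this into an elementary integral and yields, after imposing $z=1$, an expression for $Z^4(t)$ that is rational in $e^{2(y^2-x^2)t}$.

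Next I would integrate the linear equation $E'=EZ^4$, i.e. $E=e\exp\!\bigl(\int_0^t Z^4\bigr)$. The only nontrivial integral is $\int_0^t\bigl(x^2e^{2(y^2-x^2)s}-y^2\bigr)^{-1}\,ds$, which I would evaluate by the substitution $w=e^{2(y^2-x^2)s}$ followed by a partial-fraction decomposition of $1/\bigl(w(x^2w-y^2)\bigr)$; this produces a single logarithm, so that $\exp\!\bigl(\int_0^t Z^4\bigr)$ is again rational in $e^{2(y^2-x^2)t}$. Setting $z=e=1$ and renaming $x^2\to x$, $y^2\to y$, the $-2y^2t$ term in the exponent combines with the factor $e^{2(y^2-x^2)t}$ coming from the logarithm to give exactly $e^{(1-2x)t}$, while the remaining rational factor $\tfrac{x-y}{xe^{2(y-x)t}-y}$ rearranges into $\bigl(1-\tfrac{x}{y-x}(e^{2(y-x)t}-1)\bigr)^{-1}$, which is the claimed formula.

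The main obstacle is bookkeeping rather than conceptual: one must carry out the two integrations and the final algebraic simplification without sign or exponent errors — in particular the cancellation that merges $e^{-2y^2t}$ with the logarithmic factor into the single exponential $e^{(1-2x)t}$ is the step most prone to slips. As a safeguard I would cross-check the expansion through order $t^2$, where the formula gives $G=1+t+\tfrac{1+4xy}{2}t^2+\cdots$, consistent with $\mdn_1^B=\{(\overline{1})\}$ and with $D^2(e)=e(z^8+4x^2y^2)$ from Theorem~\ref{thm-derangement}. Alternatively, the same identity can be reached by converting the recurrence \eqref{dnij-recu} into the first-order partial differential equation $(1-4xyt)G_t=G+2xy(1-2x)G_x+2xy(1-2y)G_y$ and verifying that the stated function solves it subject to $G|_{t=0}=1$; I would keep this as an independent confirmation.
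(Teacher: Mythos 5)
Your proposal is correct, and it takes a genuinely different route from the paper. The paper's proof never touches the ODE system: it converts the recurrence \eqref{dnij-recu} into the first-order PDE $(1-4xyt)G_t=G+(2xy-4x^2y)G_x+(2xy-4xy^2)G_y$ and then simply \emph{verifies} that the stated closed form $\widetilde{G}$ satisfies this PDE together with the conditions $\widetilde{G}(x,y;0)=1$ and $\widetilde{G}(0,y;t)=\widetilde{G}(x,0;t)=e^t$ — i.e.\ a guess-and-check argument that presupposes knowledge of the answer (your final fallback paragraph is exactly the paper's method, up to the factorizations $2xy-4x^2y=2xy(1-2x)$ and $2xy-4xy^2=2xy(1-2y)$). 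You instead \emph{derive} the formula by integrating the exponential flow of the grammar: I checked your computations and they are sound — the first integrals give $X^2-Y^2=x^2-y^2$ and $X^2/Y^2=(x^2/y^2)e^{2(y^2-x^2)t}$, whence $Z^4(t)=z^4-2y^2+\frac{2y^2(x^2-y^2)}{x^2e^{2(y^2-x^2)t}-y^2}$ and $\exp\bigl(\int_0^t Z^4\bigr)=e^{(z^4-2y^2)t}\,\frac{(x^2-y^2)e^{2(y^2-x^2)t}}{x^2e^{2(y^2-x^2)t}-y^2}$, which after setting $z=1$ and renaming $x^2\to x$, $y^2\to y$ equals $e^{(1-2x)t}\,\frac{x-y}{xe^{2(y-x)t}-y}=\frac{e^{(1-2x)t}}{1-\frac{x}{y-x}(e^{2(y-x)t}-1)}$, as claimed. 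Two small points worth making explicit: the intermediate expressions live in $\mathbb{Q}(x,y)[[t]]$, but since each term of $e^{2(y-x)t}-1$ carries a factor $y-x$, the final object $\bigl(1-\frac{x}{y-x}(e^{2(y-x)t}-1)\bigr)^{-1}$ is a genuine power series with polynomial coefficients matching $d_n(x,y)$, so the formal manipulations are legitimate; and $\Phi_t$ is a homomorphism (an ``automorphism'' claim needs slightly more, but you only use the homomorphism property). What your route buys is significant: by not setting $z=1$ you get the singleton-refined generating function $e^{(z-2x)t}\bigl(1-\frac{x}{y-x}(e^{2(y-x)t}-1)\bigr)^{-1}$ for free, which is precisely the $u=v=1$ case of the five-variable identity the paper states without proof immediately after the proposition; the paper's PDE verification yields none of these refinements without redoing the work.
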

\begin{proof}
Let $d_n(x,y)=\sum_{i,j\geq 0}d(n,i,j)x^iy^i$. It follows from~\eqref{dnij-recu} that the polynomials $d_n(x,y)$ satisfy the recurrence relation
\begin{equation}\label{dnxy}
d_{n+1}(x,y)=(1+4nxy)d_n(x,y)+(2xy-4x^2y)\frac{\partial}{\partial x}d_n(x,y)+(2xy-4xy^2)\frac{\partial}{\partial y}d_n(x,y),
\end{equation}
with the initial conditions $d_0(x,y)=d_1(x,y)=1$.
By rewriting~\eqref{dnxy} in terms of the generating function $G$, we get
\begin{equation}\label{Gex-diff}
(1-4xyt)G_t=G+(2xy-4x^2y)G_x+(2xy-4xy^2)G_y.
\end{equation}
It is routine to check that $$\widetilde{G}(x,y;t)=\frac{e^{(1-2x)t}}{1-\frac{x}{y-x}(e^{2(y-x)t}-1)}$$ satisfies~\eqref{Gex-diff}.
Also, $\widetilde{G}(x,y;0)=1$ and $\widetilde{G}(0,y;t)=\widetilde{G}(x,0;t)=e^t$. Hence $G=\widetilde{G}$.
\end{proof}

Let $P(\pi)=\#\{i\in [n]: \pi(i)>0\}$. Then $N(\pi)+P(\pi)=n$.
It is not hard to check that
\begin{equation*}
\sum_{n\geq 0}\sum_{\pi\in\mdn_n^B}x^{\we(\pi)}y^{\aexc(\pi)}z^{\single(\pi)}u^{P(\pi)}v^{N(\pi)}\frac{t^n}{n!}
=\frac{e^{\left(vz-(u+v)x\right)t}}{1-\frac{x}{y-x}\left(e^{(u+v)(y-x)t}-1\right)}.
\end{equation*}

In the proof of Theorem~\ref{thm-derangement}, if we further put a superscript
label $q$ before each cycle of $\pi\in\mdn_n^B$, then we get the following result.
\begin{prop}
Let $A=\{x,y,z,e\}$ and
$G=\{x\rightarrow xy^2,y\rightarrow x^2y, z\rightarrow x^2y^2z^{-3}, e\rightarrow qez^4\}$.
Then
\begin{equation*}
D^n(e)=e\sum_{\pi\in \mdn_n^B}x^{2\we(\pi)}y^{2\aexc(\pi)}z^{4\single(\pi)}q^{\cyc(\pi)}.
\end{equation*}
\end{prop}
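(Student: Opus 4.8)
The plan is to reuse the proof of Theorem~\ref{thm-derangement} essentially verbatim, modifying only the labeling so as to also record the number of cycles. First I would augment the labeling of $\pi\in\mdn_n^B$ given there by one additional rule: put a superscript label $q$ in front of each cycle in the standard cycle decomposition of $\pi$. With this single addition the weight becomes
$$w(\pi)=ex^{2\we(\pi)}y^{2\aexc(\pi)}z^{4\single(\pi)}q^{\cyc(\pi)}.$$
The base case is immediate, since the lone derangement $(\overline{1})\in\mdn_1^B$ now carries an extra label $q$ and has weight $qez^4=D(e)$ under the rule $e\rightarrow qez^4$.

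The key observation, and the reason that only the $e$-rule must change, is that among the four insertion operations $(c_1)$--$(c_4)$ analysed in the proof of Theorem~\ref{thm-derangement}, exactly one alters the cycle count. Inserting $\overline{n+1}$ as a new singleton cycle $(\overline{n+1})$, which is case $(c_1)$, increases $\cyc$ by one, whereas each of the insertions into an already existing cycle, namely cases $(c_2)$--$(c_4)$, merely enlarges one existing cycle without creating or destroying any cycle and hence leaves $\cyc$ unchanged. Since case $(c_1)$ is precisely the operation governed by the $e$-rule, replacing $e\rightarrow ez^4$ by $e\rightarrow qez^4$ attaches a factor $q$ at each cycle creation and at no other step, so that the accumulated power of $q$ in the weight of any generated permutation equals $q^{\cyc(\pi)}$.

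Finally I would run the same induction as in Theorem~\ref{thm-derangement}, now treating $q$ as a constant of the grammar: applying $D$ to $D^n(e)$ distributes over the sum, and on a typical monomial the three rules on $x,y,z$ reproduce cases $(c_2)$--$(c_4)$ with the same powers of $x,y,z$ as before, while the modified $e$-rule reproduces case $(c_1)$ and simultaneously multiplies by $q$. I do not anticipate a genuine obstacle; the only point requiring care is the bookkeeping in case $(c_2)$, where a singleton $(\overline{i})$ marked by $z^4$ becomes a transposition, and one must confirm that this does not change the number of cycles, so that no spurious factor of $q$ appears. Once this is verified the induction closes and the proposition follows.
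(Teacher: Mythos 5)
Your proposal is correct and is essentially identical to the paper's own argument: the paper proves this proposition precisely by adding a superscript label $q$ before each cycle in the labeling from the proof of Theorem~\ref{thm-derangement} and replacing $e\rightarrow ez^4$ by $e\rightarrow qez^4$, since only the new-cycle insertion (case $(c_1)$) changes $\cyc(\pi)$. Your extra check in case $(c_2)$ that turning a singleton into a transposition preserves the cycle count is a sound (if implicit in the paper) detail, so nothing further is needed.
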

%%%%%%%%%%%%%%%%%%%%%%%%%%%%%%%%%%%%%%%%%%%%%%%%%%%%%%%%%%%%%%%%%%%%%%%%%
%%%%%%%%%%%%%%%%%%%%%%%%%%%%%%%%%%%%%%%%%%%%%%%%%%%%%%%%%%%%%%%%%%%%%%%%%


\begin{thebibliography}{26}
%\bibitem{Andre84}
%D. Andr\'e, \'Etude sur les maxima, minima et s\'equences des permutations, \newblock {\em Ann. Sci. \'Ecole Norm. Sup.}, 3(1) (1884), 121--135.

\bibitem{Bona12}
M. B\'ona, Combinatorics of Permutations, second edition, CRC Press, Boca Raton, FL, 2012.

\bibitem{Bra06}
P. Br\"and\'en, On linear transformations preserving the P\'olya
frequency property, \newblock {\em Trans. Amer. Math. Soc.}, 358 (2006), 3697--3716.

\bibitem{Brenti90}
F. Brenti, Unimodal polynomials arising from symmetric functions, \newblock {\em Proc. Amer.
Math. Soc.}, 108 (1990), 1133--1141.


\bibitem{Bre94}
F. Brenti, $q$-Eulerian polynomials arising from Coxeter groups,
\newblock {\em European J. Combin.}, 15 (1994), 417--441.

\bibitem{Bre00}
F. Brenti, A class of $q$-symmetric functions arising from plethysm,
\newblock {\em J. Combin. Theory Ser. A}, 91 (2000), 137--170.

\bibitem{Chen93}
W.Y.C. Chen, Context-free grammars, differential operators and formal
power series, \newblock {\em Theoret. Comput. Sci.}, 117 (1993), 113--129.

\bibitem{Chen09}
W.Y. C. Chen, R.L. Tang and A.F. Y. Zhao, Derangement polynomials and excedances of type $B$,
\newblock {\em Electron. J. Combin.}, 16(2) (2009), \#R15.


\bibitem{Chen16}
W.Y.C. Chen, A.M. Fu, Context-free grammars for permutations and increasing trees, \newblock {\em Adv. in Appl. Math.}, 82 (2017), 58--82.

\bibitem{Chow09}
C.-O. Chow, On derangement polynomials of type $B$. II, \newblock {\em J. Combin. Theory Ser.
A}, 116 (2009), 816--830.

\bibitem{Chow14}
C.-O. Chow, S-M. Ma, Counting signed permutations by their alternating runs, \newblock {\em Discrete Math.}, 323 (2014), 49--57.

%\bibitem{Conrad06}
%E.V.F. Conrad, P. Flajolet, The Fermat cubic, elliptic functions, continued fractions, and a combinatorial excursion,
%S\'em. Lothar. Combin, 54 (2006), Article B54g.

\bibitem{DB62}
F. N. David and D. E. Barton, Combinatorial Chance, Charles Griffin
and Company, Ltd. London, UK, 1962.

\bibitem{Dumont96}
D. Dumont, Grammaires de William Chen et d\'erivations dans les arbres et
arborescences, \newblock {\em S\'em. Lothar. Combin.}, 37, Art. B37a (1996), 1--21.


\bibitem{Dumont9602}
D. Dumont, A. Ramamonjisoa, Grammaire de Ramanujan et Arbres de Cayley, \newblock {\em Electron. J. Combin.}, 3 (1996), R17.

%\bibitem{Flajolet06}
%P. Flajolet, P. Dumas, and V. Puyhaubert, Some exactly solvable models
%of urn process theory, DMTCS proc. AG, 2006, 59--118.

\bibitem{FS70}
D. Foata, M. Sch\"utzenberger, Th\'eorie G\'eom\'etrique des
Polyn\^omes Euleriens, Lecture Notes in Mathematics, vol. 138,
Springer-Verlag, Berlin-New York, 1970.


%\bibitem{Kim01}
%D. Kim, J. Zeng, A new decomposition of derangements, \newblock {\em J. Combin. Theory Ser. A}, 96 (2001), 192--198.

\bibitem{Lin15}
Z. Lin, J. Zeng, The $\gamma$-positivity of basic Eulerian polynomials via group actions, \newblock {\em J. Combin. Theory Ser.
A}, 135 (2015), 112--129.

\bibitem{Ma121}
S.-M. Ma, Derivative polynomials and enumeration of permutations by number of interior and left peaks,
\newblock {\em Discrete Math.}, 312 (2012), 405--412.

\bibitem{Ma131}
S.-M. Ma, Some combinatorial arrays generated by context-free grammars, \newblock {\em European J. Combin.},
34 (2013), 1081--1091.

\bibitem{Ma132}
S.-M. Ma, Enumeration of permutations by number of alternating runs, \newblock {\em Discrete Math.}, 313 (2013), 1816--1822.
%
\bibitem{Ma08}
S.-M. Ma, Y. Wang, $q$-Eulerian polynomials and polynomials with only real zeros, \newblock {\em Electron. J. Combin.}, 15 (2008), R17.

\bibitem{Ma16}
S.-M. Ma, Y.-N. Yeh, Eulerian polynomials, perfect matchings and Stirling permutations of the second kind, \arxiv{1607.01311}.

%\bibitem{Purtill93}
%M. Purtill, Andr\'e permutations, lexicographic shellability and the cd-index of a convex polytope, Trans. Amer. Math. Soc. 338 (1993) 77--104 .
%\bibitem{Pak15}
%I. Pak, R. Pemantle, On the longest $k$-alternating subsequence, \newblock {\em Electron. J. Combin.} 22(1)
%(2015), \#P1.48.

%\bibitem{MacMahon1915}
%P.A. MacMahon, Combinatory Analysis, 2 volumes, Cambridge University Press, London,
%1915--1916. Reprinted by Chelsea, New York, 1960.

\bibitem{Zeng12}
H. Shin, J. Zeng, The symmetric and unimodal expansion of Eulerian polynomials via continued fractions, \newblock {\em European J. Combin.}, 33 (2012), 111--127.


%\bibitem{Sloane}
%N.J.A. Sloane, The On-Line Encyclopedia of Integer Sequences,
%published electronically at
%http://oeis.org, 2010.

\bibitem{Sta08}
R.P. Stanley, Longest alternating subsequences of permutations,
\newblock {\em Michigan Math. J.}, 57 (2008), 675--687.
%
%\bibitem{Springer}
%T.A. Springer, Remarks on a combinatorial problem, \newblock {\em Nieuw Arch. Wisk.} 19 (1971), 30--36.
%
%\bibitem{Sta08}
%R.P. Stanley, Longest alternating subsequences of permutations,
%\newblock {\em Michigan Math. J.} 57 (2008), 675--687.
%
%\bibitem{Sundaram1994}
%S. Sundaram, The homology representations of the symmetric group on Cohen-Macaulay subposets of the partition lattice, \newblock {\em Adv. Math.} 104 (1994), 225--296.
%
%\bibitem{Sundaram1995}
%S. Sundaram, The homology of partitions with an even number of blocks, \newblock {\em J. Algebraic Combin.} 4 (1995),
%69--92.
%
%\bibitem{Sundaram1996}
%S. Sundaram, Plethysm, partitions with an even number of blocks and Euler numbers, DIMACS Series
%in Discrete Mathematics and Theoretical Computer Science 24, (Billera, Greene, Simion,
%Stanley, eds.), Amer. Math. Soc. (1996), 171--198.
%\bibitem{Bona12}
%M. B\'ona,, Combinatorics of Permutations, Second Edition, Chapman \& Hall/CRC, Boca Raton, Florida, 2012.
%\bibitem{Janson11}
%S. Janson, M. Kuba and A. Panholzer, Generalized Stirling permutations, families of increasing trees and urn models,
%\newblock {\em J. Combin. Theory Ser. A,} 118 (2011) 94--114.
\bibitem{Zhang95}
X.D. Zhang, On $q$-derangement polynomials, \newblock {\em Combinatorics and Graph Theory} 95,
Vol.~1 (Hefei), World Sci. Publishing, River Edge, NJ, 1995, pp. 462--465.

\bibitem{Zhao11}
A.F.Y. Zhao, The combinatorics on permutations and derangements of type $B$, \newblock Ph.D. dissertation, Nankai University, 2011.
\end{thebibliography}
\end{document}